\documentclass[12pt]{article}

\pdfoutput=1

\usepackage{amsthm,enumerate}
\usepackage{mathtools}
\usepackage[normalem]{ulem}
\usepackage[T1]{fontenc}
\usepackage[utf8]{inputenc}
\usepackage[thinc]{esdiff}
\usepackage{color}
\usepackage{fancyhdr}
\usepackage{calc}
\usepackage{url}
\usepackage{xcolor}
\usepackage{xspace}

\usepackage[all,cmtip]{xy} %can be removed
\usepackage{tikz-cd}

\usepackage[top=80pt,bottom=85pt,left=85pt,right=85pt]{geometry}
\usepackage{amsmath,amssymb,graphicx,float,color,tikz,cite,savesym,wasysym,subfigure,wrapfig}
\usepackage{caption}
\usepackage[debug,pageanchor=false]{hyperref}
\definecolor{link}{rgb}{1,0.45,0.05}
\hypersetup{colorlinks=true,linkcolor=link,citecolor=link,urlcolor=link,linktocpage}

\newcommand{\R}{\mathbb{R}}

\newcommand{\dd}{\mathrm{d}}
\newcommand{\ee}{\mathrm{e}}

\setlength{\footnotesep}{.7\baselineskip}

\newcommand{\N}{\mathbb{N}}

\newcommand{\supp}{\text{\rm supp}}

\newcommand{\Lip}{\mathrm{Lip}}
\newcommand{\lip}{\mathrm{lip}}
\newcommand{\diam}{{\rm{diam\,}}}

\newcommand{\RCD}{\mathsf{RCD}}
\newcommand{\CD}{\mathsf{CD}}

\newcommand{\Ric}{{\rm Ric}}

\newcommand{\mm}{\mathfrak m}
\newcommand{\di}{\mathsf{d}}

\newcommand{\sfd}{\mathsf d}

\newcommand{\Ch}{\mathsf{Ch}}

\theoremstyle{plain}
\newtheorem{lemma}{Lemma}[section]
\newtheorem{theorem}[lemma]{Theorem}

\newtheorem{proposition}[lemma]{Proposition}

\newtheorem*{theorem*}{Theorem}
\newtheorem*{maintheorem*}{Main Theorem}

\theoremstyle{definition}

\newtheorem{definition}[lemma]{Definition}
\newtheorem*{definition*}{Definition}
\newtheorem*{remark*}{Remark}
\newtheorem{remark}[lemma]{Remark}

\makeatletter
\@addtoreset{equation}{section}
\makeatother

\begin{document}

	\begin{titlepage}

	\begin{center}

	\vskip .5in %.3in
	\noindent

	{\Large \bf{Cheng's eigenvalue comparison on metric measure spaces and applications}}

	\bigskip\medskip
	 G. Bruno De Luca,$^1$  Nicol\`o De Ponti,$^2$
	Andrea Mondino,$^3$	Alessandro Tomasiello$^{4,5}$\\

	\bigskip\medskip
	{\small
$^1$ Leinweber Institute for Theoretical Physics at Stanford,\\
382 Via Pueblo Mall, Stanford, CA 94305, United States;
\\	
	\vspace{.3cm}%{.1cm}
	$^2$
	Dipartimento di Matematica, Politecnico di Milano,\\
 Piazza Leonardo da Vinci 32, 20133 Milano, Italy;
\\	
	\vspace{.3cm}%{.1cm}
	$^3$ Mathematical Institute, University of Oxford, Andrew-Wiles Building,\\ Woodstock Road, Oxford, OX2 6GG, UK;
\\
	\vspace{.3cm}%{.1cm}
	$^4$
	Dipartimento di Matematica e Applicazioni, Universit\`a di Milano--Bicocca, \\ Via Cozzi 55, 20126 Milano, Italy;
\\	
	\vspace{.3cm}
$^5$ INFN, sezione di Milano--Bicocca.
		}

   \vskip .5cm %.3cm
	{\small \tt gbdeluca@stanford.edu, nicolo.deponti@polimi.it,\\ andrea.mondino@maths.ox.ac.uk, alessandro.tomasiello@unimib.it}
	\vskip .9cm %.6cm
	     	{\bf Abstract }
	\vskip .1in
	\end{center}

	\noindent
	Using the localization technique, we prove a sharp upper bound on the first Dirichlet eigenvalue of metric balls in  essentially non-branching $\mathsf{CD}^{\star}(K,N)$ spaces. This extends a celebrated result of Cheng to the non-smooth setting of metric measure spaces satisfying Ricci curvature lower bounds in a synthetic sense, via optimal transport. Rigidity and stability statements are provided for $\mathsf{RCD}^{\star}(K,N)$ spaces; the stability seems to be new even for smooth Riemannian manifolds. We then present some mathematical and physical applications: in the former, we obtain an upper bound on the $j^{th}$ Neumann eigenvalue in essentially non-branching $\mathsf{CD}^{\star}(K,N)$ spaces and a bound on the essential spectrum in non-compact  $\mathsf{RCD}^{\star}(K,N)$ spaces;  in the latter, the eigenvalue bounds correspond to general upper bounds on the masses of the spin-2 Kaluza-Klein excitations around general warped compactifications of higher-dimensional theories of gravity.
	\noindent

	%\vfill
	%\eject

	\end{titlepage}

\tableofcontents

\section{Introduction} % (fold)
\label{sec:intro}

Given a domain $\Omega$ inside a space $X$, a classical question in spectral analysis is to determine meaningful bounds on $\lambda_1^D(\Omega)$, i.e. the first Laplacian Dirichlet eigenvalue of $\Omega$. This quantity is indeed connected to many other interesting mathematical objects, including the boundary hitting time of Brownian motion, and can be physically described as the principal frequency of a membrane fixed along its boundary. 
In general, the ambient space $X$ can possibly be curved and many meaningful bounds have been obtained in the literature in terms of quantities relative to the geometry of the domain and the space, see \cite{LMP23} for a recent monograph on this subject. These results are even more relevant when the domain has a peculiar geometric shape, and a typical example in this direction is given by a celebrated result of Cheng \cite{cheng-bound} that ensures a sharp upper bound of the first Dirichlet eigenvalue of a geodesic ball inside a $N$-dimensional Riemannian manifold with Ricci curvature bounded below by $K\in \R$. The result of Cheng enters the framework of comparison theorems in Riemannian geometry, indeed the bound is given in terms of the corresponding quantity in the model space, namely we have that $\lambda_1^D(B(x_0,r_0))$ is bounded from above by the first Dirichlet eigenvalue of the ball of radius $r_0$ in the model space of constant sectional curvature equal to $K/(N-1)$. Here and below, $B(x_0,r_0)$ denotes the ball centred at $x_0\in X$ with radius $r_0>0$.

In recent years, a tremendous amount of effort has been devoted to giving a synthetic description of lower Ricci curvature bounds and then to generalizing many classical results of Riemannian geometry to the non-smooth setting, starting from the seminal works of Sturm \cite{St,St1} and Lott–Villani\cite{LottVillani-Annals2009}. The resulting spaces, known as $\mathsf{CD}(K,N)$ spaces where the numbers $K\in \mathbb{R}$ and $N\in (1,\infty)$ stand for a synthetic lower bound on the Ricci curvature and an upper bound on the dimension, respectively, are now quite well understood. The primary aim of this article is to further our knowledge of this class of spaces by giving a generalization of Cheng's upper bound in this setting. More precisely, we prove the following theorem:

\begin{theorem}\label{th: Cheng dir}
Let $(X,\di,\mm)$ be an essentially non-branching $\CD^{\star}(K,N)$ space, $K\in \R$ and $N\in (1,\infty)$. Let $x_0\in X$ and $r_0>0$. Then 
\begin{equation}\label{eq:ChengBoundIntro}
\lambda_1^D(B(x_0,r_0))\le \lambda_{K,N,r_0},
\end{equation}
where $\lambda_{K,N,r_0}$ is defined in \eqref{def: firsteigen opt}.
\end{theorem}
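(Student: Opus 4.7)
The strategy is a Rayleigh-quotient argument in which the test function is the pull-back of the radial first Dirichlet eigenfunction of the one-dimensional model problem by $\di(\cdot,x_0)$, and the resulting Dirichlet integrals are controlled via the Cavalletti--Mondino localisation of $\mm$ along this distance.

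Let $\phi_0\colon[0,r_0]\to[0,\infty)$ denote the first Dirichlet eigenfunction of the radial model operator
\[ (s_K^{N-1}\phi_0')' + \lambda_{K,N,r_0}\, s_K^{N-1}\phi_0 = 0, \qquad \phi_0'(0)=0,\quad \phi_0(r_0)=0,\]
normalised to be positive on $[0,r_0)$ (hence $\phi_0'\le 0$). Setting $u:=\phi_0\circ\di(\cdot,x_0)$ and extending by zero outside $B(x_0,r_0)$, the $1$-Lipschitz character of $\di(\cdot,x_0)$, together with $\phi_0\in C^1([0,r_0])$ and $\phi_0(r_0)=0$, gives $u\in W^{1,2}_0(B(x_0,r_0))$ with $|\nabla u|(x)\le |\phi_0'(\di(x,x_0))|$. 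By Rayleigh's principle, the theorem reduces to showing
\[ J := \int_{B(x_0,r_0)} g(\di(\cdot,x_0))\,d\mm \le 0, \qquad g(t):=(\phi_0'(t))^2-\lambda_{K,N,r_0}\,\phi_0(t)^2. \]
Applying the Cavalletti--Mondino localisation to the $1$-Lipschitz function $\di(\cdot,x_0)$ in the essentially non-branching $\CD^\star(K,N)$ setting yields a disintegration
\[ \mm = \int_Q h_\alpha\,\mathcal{H}^1|_{X_\alpha}\,d\q(\alpha), \]
where each $X_\alpha$ is a unit-speed geodesic ray issuing from $x_0$ on some interval $[0,L_\alpha]$, and $h_\alpha$ satisfies the one-dimensional $\CD^\star(K,N)$ condition. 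A direct consequence is that $\rho_\alpha(t):=h_\alpha(t)/s_K(t)^{N-1}$ is non-increasing on $[0,L_\alpha]$.

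The remaining estimate is one-dimensional. Set $G(s):=\int_0^s g(t)\,s_K(t)^{N-1}\,dt$; an integration by parts using the model ODE $(s_K^{N-1}\phi_0')'=-\lambda_{K,N,r_0}\,s_K^{N-1}\phi_0$ yields the closed-form identity
\[ G(s) = \phi_0(s)\,\phi_0'(s)\,s_K(s)^{N-1}, \qquad s\in[0,r_0], \]
so that $G\le 0$ on $[0,r_0]$ with $G(0)=G(r_0)=0$. For $\q$-a.e.\ $\alpha$, setting $T_\alpha:=r_0\wedge L_\alpha$ and integrating by parts against the non-increasing $\rho_\alpha$,
\[ \int_0^{T_\alpha} g(t)\,h_\alpha(t)\,dt = \int_0^{T_\alpha} G'(t)\rho_\alpha(t)\,dt = G(T_\alpha)\rho_\alpha(T_\alpha) - \int_0^{T_\alpha} G(t)\rho_\alpha'(t)\,dt \le 0, \]
since $G\le 0$, $\rho_\alpha'\le 0$, and the boundary contribution at $T_\alpha$ is non-positive (and vanishes when $T_\alpha=r_0$). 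Integrating over $Q$ gives $J\le 0$ and hence the theorem.

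The principal obstacle is the geometric input. One must verify that the Cavalletti--Mondino localisation of $\di(\cdot,x_0)$ on an essentially non-branching $\CD^\star(K,N)$ space produces rays that genuinely emanate from the distinguished point $x_0$, and that the conditional densities $h_\alpha$ are the natural $\CD^\star(K,N)$ densities with respect to arclength measured from $x_0$; this is precisely what makes the sharp model density $s_K^{N-1}$, rooted at $t=0$, the correct comparison object and hence $\rho_\alpha$ monotone in the right direction. Once these structural facts from the localisation theory are in place (together with the standard measure-theoretic control of branch points and null sets outside the transport set), the ensuing argument is the purely analytic one-dimensional comparison via the identity for $G$.
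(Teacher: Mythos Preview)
Your proof is correct and follows the same strategy as the paper: the radial test function $\varphi_{K,N,r_0}\circ\di_{x_0}$, localisation of $\mm$ along geodesic rays from $x_0$, and a one-dimensional comparison resting on $(\log h_\alpha)'\le(\log h_{K,N})'$ (equivalently, your monotonicity of $\rho_\alpha=h_\alpha/s_K^{N-1}$). The only difference is organisational: you package the one-dimensional step via the closed-form identity $G(s)=\phi_0(s)\phi_0'(s)s_K(s)^{N-1}$ and integrate by parts against the monotone $\rho_\alpha$, whereas the paper multiplies the model ODE through by $\varphi_{K,N,r_0} h$ and integrates the resulting pointwise inequality directly---these are equivalent rearrangements of the same computation.

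One small point to tidy up: since $h_\alpha$ need not vanish at the tip while $s_K^{N-1}$ does, $\rho_\alpha(t)$ may blow up as $t\downarrow 0$, so your integration by parts should be performed on $[\varepsilon,T_\alpha]$ first and then $\varepsilon\downarrow 0$, using that $G(\varepsilon)\rho_\alpha(\varepsilon)=\phi_0(\varepsilon)\phi_0'(\varepsilon)h_\alpha(\varepsilon)\to 0$ (since $\phi_0'(0^+)=0$ and $h_\alpha$ is bounded); the paper handles the analogous endpoint limit in precisely this way.
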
 
We also obtain the following rigidity result: if equality holds in \eqref{eq:ChengBoundIntro} and in addition $(X,\di,\mm)$ is infinitesimally hilbertian, i.e.\;$(X,\di,\mm)$ is an $\RCD^{\star}(K,N)$ space, then the geometry of $X$ is rigid (see Theorem \ref{th: Cheng dir-Rigid} for the precise statement). Combining the aforementioned rigidity with Gromov's precompactness, the stability of the $\RCD^*(K,N)$ condition, and the upper semi-continuity of the first Dirichlet eigenvalue on metric balls under pmGH convergence (see \cite[Lemma 2.10]{AH18}), we then prove a stability result: if equality is almost attained in \eqref{eq:ChengBoundIntro}, then the space is pmGH-close to a $(K,N)$-cone (see Theorem \ref{th: Cheng dir-Stab} for the precise statement). Let us stress that such a stability statement for Cheng's eigenvalue comparison seems new even for smooth Riemannian manifolds.
\medskip

The essentially non-branching condition, originally introduced in \cite{rajala-sturm}, is a technical hypothesis that, roughly speaking, prevents geodesics in a space from branching excessively in a measure-theoretic sense. It is well known that this assumption plays a crucial role in the application of the so-called \emph{localization technique} \cite{cavalletti-mondino-inv}, the main technical tool used to prove Theorem \ref{th: Cheng dir}. Fundamentally, this technique reduces what is initially a high-dimensional problem on $X$ to a family of simpler one-dimensional problems. The corresponding one-dimensional analysis is carried out in Proposition \ref{prop: modeleigen} and Theorem \ref{th: 1dim}.

We emphasize that Theorem \ref{th: Cheng dir} is satisfied by a broad class of spaces, including reversible Finsler manifolds and weighted Riemannian manifolds, as well as potentially nonsmooth spaces such as Alexandrov spaces, Ricci limit spaces and, more generally, $\RCD(K,N)$ spaces. Moreover, it has the advantage of providing a sharp upper bound, with the quantity $\lambda_{K,N,r_0}$ that represents the first eigenvalue of the corresponding one-dimensional model space described in Definition \ref{def: model space}. Although its exact value is not explicit, it can nonetheless be effectively bounded from above, as we discuss in Remark \ref{rem: expl value} for the full range $N\in (1,\infty)$.

A couple of direct consequences of Theorem \ref{th: Cheng dir} are discussed in Section \ref{sec: consequence}. Assuming that the space has finite diameter, Theorem \ref{th: Cheng neum} provides an upper bound on the $j^{th}$ Laplacian eigenvalue of $(X,\di,\mm)$. For non-compact spaces, we instead establish an upper bound for the bottom of the essential spectrum of the Laplacian; see Theorem \ref{th: essspect}.

While Cheng's inequality has already been studied in the context of smooth weighted Riemannian manifolds \cite{setti-eigenvalues} (see also \cite{YinHe2015} for the Finslerian counterpart), the novelty of our work lies in extending the result to spaces that may contain singularities. This is particularly significant in the study of gravity compactifications, where it has been observed that a large class of spaces considered in this field belongs to the class of $\RCD(K,N)$ spaces \cite{deluca-deponti-mondino-t}. In this context, bounds on Laplacian eigenvalues are especially relevant, as they directly correspond to bounds on the masses of spin-2 Kaluza–Klein particles on general warped compactifications. Specifically, denoting by $\Lambda$ and $f$ respectively the cosmological constant and the warp function, in Section \ref{sec: phys}, we apply Thm.~\ref{th: Cheng dir} and previous curvature bounds \cite{deluca-t-leaps, deluca-deponti-mondino-t-entropy} to derive
\begin{proposition}\label{prop:kk-intro}
	The mass of the $j$-th spin-two Kaluza Klein fluctuation around a general $d$-dimensional vacuum compactification \eqref{eq:metAns} of a $D$-dimensional gravitational theory that satisfies the REC is bounded as
	\begin{equation}
		m^2_j \leqslant \lambda_{K, N, r_0}
	\end{equation}
	with
	\begin{equation}
		K = \Lambda-\frac{N + d-2}{(D-2)(N-D+d)}\sigma^2\;, \qquad \sigma \equiv \sup{|\nabla f|}\;,\qquad r_0 = \frac{\diam}{2j}\;,
	\end{equation}
	$\forall N > D-d$, where $\mathrm{diam}$ is the diameter of the internal space.
\end{proposition}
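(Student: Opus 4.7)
The plan is to reduce the physical statement to a spectral problem on the internal space and then apply the Neumann eigenvalue consequence of Cheng's bound from Theorem \ref{th: Cheng neum}. First, I would invoke the standard dictionary between spin-two Kaluza-Klein fluctuations around a warped compactification of the form \eqref{eq:metAns} and eigenvalues of the weighted Laplacian on the internal space equipped with the warped measure $\ee^{-f}\mathrm{vol}_g$: decomposing a transverse-traceless graviton as the product of a $d$-dimensional massive tensor field and an internal mode $\psi$, the linearised Einstein equations reduce to $-\Delta_f \psi = m^2\psi$. Hence the spin-two KK mass spectrum coincides with the spectrum of the $f$-Laplacian on the internal space, and $m_j^2$ is precisely its $j$-th (Neumann) eigenvalue.

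Next, I would import the curvature estimates established in \cite{deluca-t-leaps, deluca-deponti-mondino-t-entropy}: the REC on the $D$-dimensional theory, together with the warped Einstein equations, implies the Bakry-Émery lower bound
\begin{equation}
\Ricnf \;\geq\; \Lambda - \frac{N+d-2}{(D-2)(N-D+d)}\,\sigma^2, \qquad \sigma = \sup|\nabla f|,
\end{equation}
for every choice of $N > D-d$. Combined with the identification of such warped compactifications as $\RCD^{\star}(K,N)$ spaces from \cite{deluca-deponti-mondino-t}, this promotes the internal space, endowed with the warped measure, to an essentially non-branching $\CD^{\star}(K,N)$ space with the $K$ appearing in the statement.

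Finally, I would feed these parameters into Theorem \ref{th: Cheng neum} with $r_0 = \diam/(2j)$. The theorem outputs $\lambda_j \leq \lambda_{K,N,r_0}$ for the $j$-th eigenvalue of the $f$-Laplacian, which through the identification of the first step is exactly the claimed bound on $m_j^2$.

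The main conceptual obstacle is ensuring that the geometric inputs genuinely match the hypotheses of Theorem \ref{th: Cheng neum}: the essentially non-branching property is supplied by the $\RCD$ identification of \cite{deluca-deponti-mondino-t}, the finite-diameter hypothesis is exactly what renders $r_0 = \diam/(2j)$ meaningful, and the delicate step, namely deriving the stated lower Ricci bound with explicit dependence on $\sigma$ and on the free parameter $N > D-d$, has already been carried out in the earlier references. Once these ingredients are in place, the present proposition is a direct application of Theorem \ref{th: Cheng neum}.
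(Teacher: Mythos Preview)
Your proposal is correct and follows essentially the same route as the paper: identify the spin-two KK masses with the Neumann eigenvalues of the weighted Laplacian on the internal space, convert the REC into the Bakry--\'Emery bound $\Ricnf \geq \Lambda - \frac{N+d-2}{(D-2)(N-D+d)}\sigma^2$ for $N>D-d$ (this is exactly \eqref{eq:genN} in the paper), and then apply Theorem~\ref{th: Cheng neum}. One small slip: the warped measure in the paper's conventions is $\ee^{f}\mathrm{vol}_g$ rather than $\ee^{-f}\mathrm{vol}_g$ (cf.\ \eqref{eq:BELaplacian}), but this does not affect the argument.
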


We conclude this introduction by noting that, to the best of our knowledge, the only prior version of Cheng’s inequality applicable to possibly non-smooth metric measure spaces was obtained in \cite{kristaly}. There, the author derives a Cheng-type bound assuming a Bishop–Gromov-type volume monotonicity with \emph{negative} curvature parameter and an additional local density hypothesis on the volume measure.

\medskip
\paragraph{\em\bfseries Acknowledgments} GBDL's work is supported in part
by the NSF Grant PHY-231042. NDP is supported by the INdAM-GNAMPA Project ``Proprietà qualitative e regolarizzanti di equazioni ellittiche e paraboliche'', codice CUP $\#E5324001950001\#$. 
AM acknowledges support from the European Research Council (ERC) under the European Union's Horizon 2020 research and innovation programme, grant agreement No.~802689 ``CURVATURE''. AT is supported in part by the INFN, and by the MUR-PRIN contract 2022YZ5BA2.

The authors wish to thank Shouhei Honda for pointing out the useful reference \cite[Lemma 2.10]{AH18} and for a helpful discussion on the proof of Theorem \ref{th: Cheng dir-Stab}.

% section riemann (end)
\section{Preliminaries}\label{sec:prel}
\subsection{Metric measure spaces}

We start by recalling some basic concepts of analysis in metric measure spaces. We refer to \cite{AGS2} as a general reference on the subject.

By \emph{metric measure space} we mean a triple $(X,\di,\mm)$ where $(X,\di)$ is a complete and separable metric space endowed with a non-negative Borel measure $\mm$ finite on bounded sets and such that $\supp(\mm)=X$. The open metric ball in $(X,\di)$ with center $x_0$ and radius $r$ will be denoted by $B(x_0,r)$, i.e. $B(x_0,r):=\{x\in X\,:\; \di(x,x_0)<r\}$. We use the notation $C(Y,Z)$ for the set of continuous functions from $Y$ to $Z$, $Y,Z$ topological spaces; when $Z=\mathbb{R}$, we will drop the letter in the notation and simply write $C(Y)$. For instance, $C([0,1],X)$ will be the set of continuous curves with values in the metric space $(X,\di)$.

For every $\Omega\subset X$ open, we denote by $\Lip(\Omega)$,
$\Lip_{\textsf{bs}}(\Omega)$ and $\Lip_{\textsf{c}}(\Omega)$ the set of Lipschitz functions, Lipschitz functions with bounded support in $\Omega$ and Lipschitz functions with compact support in $\Omega$, respectively.

The slope of a function $f\in \Lip_{\textsf{bs}}(X)$ at a point $x\in X$ is defined as
$$\lip(f)(x):=\limsup_{y\to x}\frac{|f(y)-f(x)|}{\di(y,x)},$$
with the convention $\lip(f)(x)=0$ if $x$ is an isolated point in $(X,\di)$.

Given a function $f\in L^2(X,\mm)$,  the Cheeger energy $\Ch:L^2(X,\mm)\to[0,+\infty]$ is defined as
$$\Ch(f):=\inf\left\{\liminf_{n\to\infty} \frac{1}{2}\int_X \lip(f_n)^2\,\dd \mm : (f_n)\in \Lip_{bs}(X), f_n\to f\in L^2(X,\mm)\right\}.$$
The domain of the Cheeger energy is the set $D(\Ch):=\{f\in L^2(X,\mm), \ \Ch(f)<+\infty\}$. The global Sobolev space $W^{1,2}(X,\di,\mm)$ is the Banach space
$$W^{1,2}(X,\di,\mm):=D(\Ch), \quad \|f\|^2_{W^{1,2}}:=\|f\|^2_{L^2}+2\Ch(f).$$

For every $f\in D(\Ch)$ the Cheeger energy can be represented in terms of the so-called minimal weak upper gradient $|Df|$ as $\Ch(f)=\frac{1}{2}\int_X |Df|^2$. Given $f,g\in W^{1,2}(X,\di,\mm)$, we recall some well-known properties of the minimal weak upper gradient: it is a \emph{local} object in the sense that $|Df|=|Dg|$ $\mm$-a.e. in $\{f=g\}$; the \emph{chain rule} holds, i.e. for every $\varphi\in \Lip(\R)$ with $\varphi(0)=0$ we have $\varphi(f)\in W^{1,2}(X,\di,\mm)$ and $|D(\varphi(f))|=|\varphi'(f)||Df|$; the \emph{Leibniz rule} is satisfied, meaning that for every $\eta\in \Lip(X)\cap L^{\infty}(X,\mm)$ we have $\eta f\in W^{1,2}(X,\di,\mm)$ and $|D(f\eta)|\le |Df||\eta|+|D\eta||f|$.

Let $\Omega\subset X$ be open. Following \cite{AH18}, we define the local \emph{Dirichlet} Sobolev space $W^{1,2}_0(\Omega)$ as the $W^{1,2}$-closure of $\Lip_{\textsf{c}}(\Omega)$ and we endow it with the norm 
\[\|f\|^2_{W^{1,2}_0(\Omega)}:=\|f\|^2_{L^2(\Omega,\mm)}+\||Df|\|^2_{L^2(\Omega,\mm)}.\]
We also set
\begin{equation}\label{eq: def lambda_1^D}
    \lambda_1^D(\Omega):=\inf\left\{\frac{\int_{\Omega}|Df|^2\,\dd \mm}{\int_{\Omega}|f|^2\,\dd \mm}\, : \, f\in W^{1,2}_0(\Omega), \; f\not\equiv 0\right\},
\end{equation}
which corresponds to the first (variational) Dirichlet eigenvalue of the set $\Omega$.

Similarly, we can also define the Neumann (variational) eigenvalues of the space $(X,\di,\mm)$. In order to do so, it is convenient to firstly introduce the Rayleigh quotient of a non-null function $f\in W^{1,2}(X,\di,\mm)$, namely
$$\mathcal{R}(f):=\frac{2\Ch(f)}{\int_X |f|^2\,\dd \mm}.$$

The $j^{th}$ Neumann eigenvalue is then defined as
\begin{equation}\label{def: m-th Neum eigen}
\lambda^N_j:= \min_{V_{j+1}}\, \max_{f\in V_{j+1},\\ f\not\equiv 0} \ \mathcal{R}(f),\qquad j\in \N=\{0,1,2,\dots\},
\end{equation}
where $V_{j}$ denotes an arbitrary $j$-dimensional subspace of $W^{1,2}(X,\di,\mm)$. Notice that $\lambda_0^N=0$ whenever $\mm(X)<\infty.$

The space of probability measures over $X$ is denoted by $\mathcal{P}(X)$, while $\mathcal{P}_2(X)\subset\mathcal{P}(X)$ is the subset of the probability measures with finite second moment. We endow $\mathcal{P}_2(X)$ with the $2$-Kantorovich-Wasserstein distance $W_2$ given by the optimal transport problem with cost $\di^2$, making it a complete and separable metric space (e.g., see \cite{Vil}). The space $(\mathcal{P}_2(X),W_2)$ is geodesic if the underlying space $(X,\di)$ is geodesic. Recall that by definition a metric space $(X,\di)$ is geodesic if for every $x,y\in X$ there exists $\gamma\in \mathrm{Geo}(X)$ with $\gamma(0)=x$ and $\gamma(1)=y$, where we set
$$ \mathrm{Geo}(X):=\{\gamma\in C([0,1],X)\, : \, \di(\gamma(s),\gamma(t))=|t-s|\di(\gamma(0),\gamma(1)), \quad \forall s,t\in[0,1]\}.$$

Any geodesic $(\mu_t)_{t\in [0,1]}$ in $(\mathcal{P}_2(X),W_2)$ can be lifted to a measure $\nu\in\mathcal{P}(\mathrm{Geo}(X))$
such that $(e_t)_{\sharp}\nu=\mu_t$ for every $t\in [0,1]$, where $e_t:\mathrm{Geo}(X)\rightarrow X$ is the evaluation map defined as $e_t(\gamma):=\gamma(t)$. We denote by $\mathrm{OptGeo}(\mu_0,\mu_1)$ the space of all optimal dynamical plans $\nu$, i.e. the measures $\nu\in\mathcal{P}(\mathrm{Geo}(X))$ such that $(e_0,e_1)_{\sharp}\nu$ is an optimal transference plan between $\mu_0$ and $\mu_1$ for the optimal transport problem in $X$ with cost given by $\di^2$.

A subset $G\subset \mathrm{Geo}(X)$ is called non-branching if, given $\gamma_1,\gamma_2\in G$ such that there exists $t\in (0,1)$ with $\gamma_1(s)=\gamma_2(s)$ for every $s\in [0,t]$, we have $\gamma_1(s)=\gamma_2(s)$ for every $s\in [0,1]$. The metric space $(X,\di)$ is called non-branching if $\mathrm{Geo}(X)$ is non-branching. An important definition for us is the following \cite{rajala-sturm}: $(X, \di, \mm)$ is called \emph{essentially non-branching} if for all $\mu_0,\mu_1 \in\mathcal{P}_2(X)$ any $\nu\in \mathrm{OptGeo}(\mu_0,\mu_1)$ is concentrated on a Borel non-branching set $G\subset \mathrm{Geo}(X)$.

\subsection{Curvature dimension condition and Laplacian}

Let $\kappa\in \R$ and $\theta\geqslant 0$,  set
\begin{equation}\label{eq: sine func curv}
\mathfrak{s}_\kappa(\theta):=\begin{cases}
\frac{1}{\sqrt{\kappa}}\sin(\sqrt{\kappa}\theta) \quad &\textrm{if} \ \kappa>0\,,\\
\theta \quad &\textrm{if} \ \kappa=0\,,\\
\frac{1}{\sqrt{-\kappa}}\sinh(\sqrt{-\kappa}\theta) \quad &\textrm{if} \ \kappa<0\,.
\end{cases}
\end{equation}
For $t\in[0,1]$, consider the quantity
\begin{equation}\label{eq: sigma func curv}
\sigma_{\kappa}^{(t)}(0):=t\,,\hspace{2.5cm} \sigma_{\kappa}^{(t)}(\theta):=\frac{\mathfrak{s}_\kappa(t\theta)}{\mathfrak{s}_\kappa(\theta)} \,,
\end{equation}
with $\theta>0$ if $\kappa\leqslant 0$ and $\theta\in (0,\pi/\sqrt{\kappa})$ if $\kappa>0$. 
\\Given $K\in \mathbb{R}$ and $N\in (1,\infty)$, we put $\sigma_{K,N}^{(t)}:=\sigma_{K/N}^{(t)}$. Finally, for $t\in [0,1]$, consider the functions
\begin{align*}\label{eq: tau func curv}
&\tau_{K,N}^{(t)}(\theta):=\begin{cases}t^{1/N}\sigma_{K,N-1}^{(t)}(\theta)^{(N-1)/N} \qquad &\textrm{if} \ \theta\in (0,D_{K,N})\,,\\
+\infty &\textrm{otherwise},
\end{cases} \quad \textrm{for}\ t>0\,,\\
&\tau_{K,N}^{(0)}(\theta)=0.
\end{align*}

Let $(X,\sfd, \mm)$ be a metric measure space and fix $\alpha>1$. Given $\mu\in \mathcal{P}(X)$,  the $\alpha$-Tsallis entropy of $\mu$ with respect to $\mm$ is defined by
\begin{equation}
\mathcal{S}_\alpha(\mu):=
\begin{cases}
\frac{1}{\alpha-1}\left(1-\int_X \rho^{\alpha}\dd\mm\right), \qquad &\text{if} \ \mu=\rho\mm\ll \mm \text{ and } \rho^\alpha\in L^1(X,\mm)\,,  \\
+\infty \qquad &\textrm{otherwise.}
\end{cases}
\end{equation}

We are now ready to recall the definition of the curvature dimension condition  \cite{St,St1,LottVillani-Annals2009},  its reduced version \cite{BS-JFA}, and their Riemannian refinements \cite{AGS1}, \cite{Gigli}, \cite{AGMR}.
\begin{definition}\label{def: CD(K,N)}
\textbf{$\CD$ condition}: A metric measure space $(X,\di,\mm)$ satisfies the curvature-dimension condition $\CD(K,N)$, $K\in \R$ and $N\in (1,\infty)$, if for any pair of absolutely continuous measures $\mu_0=\rho_0\mm$, $\mu_1=\rho_1\mm\in \mathcal{P}_2(X)$  there exists  $\eta\in \mathcal{P}(\mathrm{Geo}(X))$ such that $\mu_t:=(\ee_t)_{\sharp}\eta=\rho_t\mm$ is a $2$-Wasserstein geodesic between $\mu_0$ and $\mu_1$ and for all $t\in[0,1]$ and all $N'\ge N$ it holds
\begin{equation}\label{eq: def CD(K,N)}
{\mathcal S}_{\frac{N'-1}{N'}}(\mu_t)\geq -N' +N' \int_{X\times X}\left[\tau_{K,N'}^{(1-t)}(\di(x,y))\rho_0(x)^{-\frac{1}{N'}}+\tau_{K,N'}^{(t)}(\di(x,y))\rho_1(y)^{-\frac{1}{N'}}\right]\dd\pi(x,y)
\end{equation}
where $\pi=(e_0,e_1)_{\sharp}\eta$ is an optimal plan between $\mu_0$ and $\mu_1$ induced by $\eta$.
\smallskip

\textbf{Reduced condition}: $(X,\di,\mm)$ satisfies the \emph{reduced} curvature-dimension condition $\CD^*(K,N)$, $K\in \R$ and $N\in (1,\infty)$, if \eqref{eq: def CD(K,N)} holds for any pair of $\mm$-absolutely continuous measures with bounded support and with $\tau_{K,N'}$ replaced by $\sigma_{K,N'}$.

\textbf{$\RCD$ condition}: $(X,\di,\mm)$ is an $\RCD(K,N)$ space (resp.\;$\RCD^*(K,N)$), $K\in \R$ and $N\in (1,\infty)$, if it is a $\CD(K,N)$ (resp.\;$\CD^*(K,N)$) space and the Sobolev space $W^{1,2}(X,\di,\mm)$ is a Hilbert space.
\end{definition}

From the definitions it follows directly that $\CD(K,N)$ implies $\CD^*(K,N)$.  A deep result proved in \cite{CaMi} (see also \cite{Li-AMPA}) is that, actually,  $\RCD(K,N)$ and $\RCD^*(K,N)$ are equivalent.

Thanks to the Hilbertianity of $W^{1,2}(X,\di,\mm)$, it is well known that on an $\RCD(K,N)$ space one can take the subdifferential of the Cheeger energy to define an operator $-\Delta: D(\Delta)\subset L^2(X,\mm)\to L^2(X,\mm)$ called (minus) \emph{Laplacian}, which automatically satisfies the integration by parts formula
\[
\int_X -f\,\Delta f\; \dd\mm=\int_X|Df|^2\, \dd\mm \qquad \forall f\in D(\Delta).
\]
The domain $D(\Delta)$ is dense in $W^{1,2}(X,\di,\mm)$ and the Laplacian is a non-negative, densely defined, self-adjoint operator on $L^2(X,\mm)$, thus entering in the classical framework of spectral theory. In particular,  the \emph{essential spectrum} of $-\Delta$  consists of cluster points of the spectrum and eigenvalues having infinite multiplicity.

\subsection{One dimensional metric measure spaces}
One dimensional model spaces for the curvature condition will play an important role in the proofs of the main results. We start with the following.
\begin{definition}
Let $K\in \mathbb{R}, N\in (1,\infty)$. A function $h\ge 0$ defined on an interval $I\subset \R$ is a $\mathsf{CD}(K,N)$ density on $I$ if for every $\theta_0,\theta_1\in I$ and every $t\in [0,1]$ it holds
\begin{equation}\label{eq:defhCD}
 h(t\theta_0+(1-t)\theta_1)^{\frac{1}{N-1}}\ge \sigma_{K,N-1}^{(t)}(|\theta_1-\theta_0|)h(\theta_1)^{\frac{1}{N-1}}+\sigma_{K,N-1}^{(t)}(|\theta_1-\theta_0|)h(\theta_0)^{\frac{1}{N-1}}.
\end{equation}
\end{definition}

We recall that if $h$ is a $\mathsf{CD}(K,N)$ density on a closed interval $I$ then $(I, |\cdot|, h(\theta)dt)$ verifies the $\mathsf{CD}(K,N)$ condition. Conversely, if the m.m.s. $(\mathbb{R}, |\cdot| , \mu)$ verifies the $\mathsf{CD}(K,N)$ condition and $I = \mathsf{supp}(\mu)$ is not a point, then $\mu\ll\mathcal{L}^1$ and there exists an $\mathcal{L}^1$-a.e.\;representative  of the density $h=d\mu/d\mathcal{L}^1$ which is a $\mathsf{CD}(K,N)$ density on $I$. Here and in the sequel, by $\mathcal{L}^1$ we denote the $1$-dimensional Lebesgue measure. Moreover, for every $\mathsf{CD}(K,N)$ density $h$ on $I$ we have that \cite{CaMi}:
\begin{itemize}
    \item $h$ is bounded and lower semicontinuous on $I$, and locally Lipschitz continuous in its interior;
    \item $h$ is strictly positive in the interior of $I$, unless it vanishes identically;
    \item $h$ is locally semi-concave in the interior. 
\end{itemize}

\begin{definition}\label{def: model space}
    The one-dimensional model metric measure space $(I_{K,N},\di_{eu},\mm_{K,N})$ with parameters $K\in \mathbb{R}$ and $N\in(1,\infty)$ is given by
    \begin{equation}
      \begin{cases}
        I_{K,N}=\overline{[0,D_{K,N})},\\
        \di_{eu}(\theta_1,\theta_2)=|\theta_2-\theta_1|,\\
        \mm_{K,N}(d\theta)=h_{K,N}(\theta)\; \mathcal{L}^1(d\theta),
    \end{cases}
    \end{equation}
where
 \begin{equation}
 D_{K,N}:=
  \begin{cases} \pi \sqrt{\frac{N-1}{K}} \quad &\textrm{for } K> 0,\\
+\infty, &\textrm{for } K\leq 0,
   \end{cases} \qquad \textrm{and} \quad  h_{K,N}(\theta):=\mathfrak{s}^{N-1}_{K/(N-1)}(\theta).
    \end{equation}
\end{definition}

\begin{remark}[On the one-dimensional model spaces]\label{Rem:1-DimModel} Notice that when $N\geq 2$ is integer, $( I_{K,N},   \di_{eu},  \mm_{K,N})$  can be seen as one-dimensional projection of $\mathbb{M}^N_{K/(N-1)}$,  the simply connected $N$-dimensional space form of constant sectional curvature $K/(N-1)$ (i.e., round sphere for $K>0$, Euclidean space for $K=0$ and hyperbolic space for $K<0$), in the sense that the Riemannian volume form of $\mathbb{M}^N_{K/(N-1)}$  in geodesic polar coordinates  writes as $\mathrm{dvol}_{\mathbb{M}^N_{K/(N-1)}}=  \mm_{K,N}  \otimes \rm{dvol}_{\mathbb{S}^{N-1}}$. The interval $[0,r_0)\subset I_{K,N}$ corresponds to a metric ball in $\mathbb{M}^N_{K/(N-1)}$ of radius $r_0$ centred at the origin of the geodesic polar coordinates.
Since metric balls in $\mathbb{M}^N_{K/(N-1)}$ achieve equality in Cheng's inequality, one can see that the segments $[0,r_0)\subset I_{K,N}$ (where  $(I_{K,N},\di_{eu},\mm_{K,N})$ is as above)  are the optimal one-dimensional model spaces for the Cheng's inequality. In contrast,  for the isoperimetric problem, the class of model spaces is more rich and complicated \cite{Milm-JEMS}.
\end{remark}

Let $K\in \mathbb{R}$, $N\in(1,\infty)$, and $r_0>0$. We define the optimal Dirichlet eigenvalue of parameters $K,N,r_0$ as the quantity
\begin{equation}\label{def: firsteigen opt}
    \lambda_{K,N,r_0}:=\inf_{u}\frac{\int_{0}^{r_0}|u'|^2\,\dd  \mm_{K,N}}{\int_{0}^{r_0}|u|^2\,\dd  \mm_{K,N}},
\end{equation}
where the infimum runs over all the non-identically vanishing Lipschitz functions $u:[0,+\infty)\to \R$ with $u(\theta)=0$ for all $\theta\ge r_0$. It is easily seen that the function $r_0\mapsto \lambda_{K,N,r_0}$ is monotone decreasing. Moreover, when $N\geq 2$ is integer, $\lambda_{K,N,r_0}$ coincides with the first Dirichlet eigenvalue of a metric ball of radius $r_0$ in the simply connected $N$-dimensional space form $\mathbb{M}^N_{K/(N-1)}$  of constant sectional curvature $K/(N-1)$ (see Remark \ref{Rem:1-DimModel}).

We denote by $\varphi_{K,N,r_0}$ the first Dirichlet eigenfunction for the interval $[0,r_0)\subset I_{K,N}$, where $( I_{K,N},   \di_{eu},  \mm_{K,N})$ is the model 1-dimensional space described above. 
By eigenfunction (relative to an eigenvalue $\lambda$), we mean here a function $\varphi_1$ which is in the kernel of the linear operator
\begin{equation}
L: H\to H', \quad Lu= u''+ (\log h_{K,N})' u' + \lambda u\,,
\end{equation}
where $H'$ denotes the dual of $H$, and the latter is the closure of the space of Lipschitz functions $u:[0,+\infty)\rightarrow \R$ such that $u(\theta)=0$ for all $\theta\ge r_0$, with the closure taken with respect to the topology of the weighted Sobolev space $W^{1,2}(I_{K,N},\di_{eu},\mm_{K,N})$. In the next proposition we show the existence and some basic properties of $\varphi_{K,N,r_0}$.

\begin{proposition}\label{prop: modeleigen}
Let $(I_{K,N},\di_{eu},\mm_{K,N})$ be the one-dimensional model metric measure space with parameters $K\in \mathbb{R}$ and $N\in(1,\infty)$. Then, for every $r_0\in (0, D_{K,N})$:
\begin{enumerate}
\item There exists a unique eigenfunction $\varphi_{K,N,r_0}:[0,+\infty)\to \mathbb{R}$ relative to $\lambda_{K,N,r_0}$, up to multiplication by a non-zero constant. 
\item $\varphi_{K,N,r_0}$ is Lipschitz continuous on $[0,+\infty)$ and of class $C^\infty$ on $(0,r_0)$, it has constant sign and it satisfies
\begin{equation}\label{eq: eigen ode}
 \varphi''_{K,N,r_0}+(\log{h_{K,N}})' \varphi'_{K,N,r_0}=-\lambda_{K,N,r_0}\varphi_{K,N,r_0} \qquad \textrm{in} \ (0,r_0).
\end{equation}
\item Under the convention that $\varphi_{K,N,r_0}$ is non-negative, then 
\begin{equation}\label{eq: eigen-Decreas}
\varphi_{K,N,r_0}(\theta)>0 \quad \textrm{ and } \quad \varphi_{K,N,r_0}'(\theta)< 0 \textrm{ for all } \theta\in (0,r_0).
\end{equation}
Moreover, $\displaystyle\lim_{\theta\downarrow 0} \varphi'_{K,N,r_0}(\theta)=0$. 

\end{enumerate}
\end{proposition}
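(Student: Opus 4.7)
The plan is to obtain $\varphi_{K,N,r_0}$ via the direct method applied to the Rayleigh quotient in \eqref{def: firsteigen opt} and then bootstrap regularity, sign and monotonicity from the associated weighted Sturm--Liouville equation. The key structural inputs are that $h_{K,N}\in C^\infty((0,D_{K,N}))$ with $h_{K,N}>0$ in the interior and $h_{K,N}(0)=0$, together with the Dirichlet constraint $u(r_0)=0$ encoded in the space $H$.

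\emph{Existence and ODE.} A weighted Poincaré inequality on $[0,r_0]$ gives compactness of the embedding $H\hookrightarrow L^2(\mm_{K,N})$. Any normalised minimising sequence of the Rayleigh quotient therefore converges, along a subsequence, weakly in $H$ and strongly in $L^2(\mm_{K,N})$; lower semicontinuity of the numerator yields a minimiser $\varphi$ satisfying
\begin{equation*}
\int_0^{r_0}\varphi'\,v'\,h_{K,N}\,\dd\theta=\lambda_{K,N,r_0}\int_0^{r_0}\varphi\,v\,h_{K,N}\,\dd\theta \qquad \forall\,v\in H.
\end{equation*}
Since the coefficients are smooth and the leading one is positive on $(0,r_0)$, classical ODE regularity promotes $\varphi$ to $C^\infty(0,r_0)$; integrating by parts then gives $(h_{K,N}\varphi')'=-\lambda_{K,N,r_0}\,h_{K,N}\,\varphi$, equivalent to \eqref{eq: eigen ode}.

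\emph{Sign and simplicity.} Since $\big||\varphi|'\big|=|\varphi'|$ a.e., the modulus $|\varphi|$ is again a minimiser, so one may assume $\varphi\geq 0$. The strong maximum principle for \eqref{eq: eigen ode} on $(0,r_0)$ then forces either $\varphi\equiv 0$ (excluded) or $\varphi>0$ throughout $(0,r_0)$. For uniqueness, given two minimisers $\varphi_1,\varphi_2$, both can be taken positive on $(0,r_0)$ by the preceding step; for a suitable $c\in\R$ the combination $\varphi_1-c\varphi_2$ would be a first eigenfunction vanishing at some interior point, which contradicts strict positivity unless $\varphi_1=c\varphi_2$. This proves (1) and the positivity part of (3).

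\emph{Monotonicity and behaviour at the origin.} Integrating $(h_{K,N}\varphi')'=-\lambda_{K,N,r_0}\,h_{K,N}\,\varphi$ from $\varepsilon$ to $\theta\in(0,r_0)$ gives
\begin{equation*}
h_{K,N}(\theta)\varphi'(\theta)-h_{K,N}(\varepsilon)\varphi'(\varepsilon)=-\lambda_{K,N,r_0}\int_\varepsilon^\theta h_{K,N}(s)\,\varphi(s)\,\dd s.
\end{equation*}
At $\theta=0$ the ODE has a regular singular point with indicial roots $0$ and $2-N$; the finite weighted Dirichlet energy $\int_0^{r_0}|\varphi'|^2\,h_{K,N}\,\dd\theta<+\infty$ combined with $\varphi\in H$ rules out the singular Frobenius mode (growing like $\theta^{2-N}$, or $\log\theta$ if $N=2$), so $h_{K,N}(\varepsilon)\varphi'(\varepsilon)\to 0$ as $\varepsilon\downarrow 0$. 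Passing to the limit yields
\begin{equation*}
h_{K,N}(\theta)\varphi'(\theta)=-\lambda_{K,N,r_0}\int_0^\theta h_{K,N}(s)\,\varphi(s)\,\dd s<0 \qquad \forall\,\theta\in(0,r_0),
\end{equation*}
whence $\varphi'<0$ on $(0,r_0)$. Dividing by $h_{K,N}(\theta)\sim\theta^{N-1}$ and using $h_{K,N}(s)\varphi(s)\sim \varphi(0)\,s^{N-1}$ near $0$ yields $\varphi'(\theta)\sim -\lambda_{K,N,r_0}\,\varphi(0)\,\theta/N\to 0$ as $\theta\downarrow 0$; together with smoothness on $(0,r_0)$ and the identical vanishing on $[r_0,+\infty)$, this gives Lipschitz regularity on $[0,+\infty)$. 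The main obstacle is precisely this analysis at the regular singular endpoint $\theta=0$: one must invoke the $H$-membership of $\varphi$ to select the regular Frobenius branch, after which strict monotonicity, Lipschitz continuity, and the vanishing of $\varphi'$ at the origin all follow from the integrated form of the ODE.
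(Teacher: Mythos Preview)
Your approach mirrors the paper's almost step for step: direct method for existence, bootstrapping the Euler--Lagrange ODE for regularity, a sign/simplicity argument via $|\varphi|$ (the paper phrases this through $w_+\in\Ker(L)$ and an orthogonality trick, but the content is the same), and integration of $(h_{K,N}\varphi')'=-\lambda\, h_{K,N}\varphi$ to obtain monotonicity and $\varphi'(0^+)=0$.

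There is, however, a genuine gap in your treatment of the singular endpoint $\theta=0$ when $1<N<2$. You assert that finite weighted Dirichlet energy together with $\varphi\in H$ rules out the Frobenius branch $\sim\theta^{2-N}$. But for $1<N<2$ this branch satisfies $|\varphi'|^2 h_{K,N}\sim\theta^{2(1-N)+(N-1)}=\theta^{1-N}$, which \emph{is} integrable near $0$ (since $1-N>-1$); moreover $\theta^{2-N}\to 0$ and can be approximated in the weighted $W^{1,2}$-norm by Lipschitz functions, so it lies in $H$. Thus neither the energy bound nor membership in $H$ excludes it. What actually kills the singular branch is the \emph{natural boundary condition} $h_{K,N}(\theta)\varphi'(\theta)\to 0$ as $\theta\downarrow 0$: test the weak identity $\int\varphi' v'\,h_{K,N}=\lambda\int\varphi\, v\,h_{K,N}$ against any $v\in H$ with $v(0)\neq 0$ and integrate by parts; the boundary contribution at $0$ must vanish, whereas for the singular branch $h_{K,N}\varphi'\to(2-N)\neq 0$. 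The paper is equally laconic here (it simply asserts the integrated identity $h_{K,N}(\theta)\varphi'(\theta)=-\lambda\int_0^\theta h_{K,N}\varphi$), but your explicit energy justification makes the issue visible. Once the natural boundary condition is established, your asymptotic $\varphi'(\theta)\sim-\lambda_{K,N,r_0}\varphi(0)\,\theta/N$ and the remaining conclusions are correct.
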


\begin{proof}
As recalled above, in case $N\geq 2$ is a natural number, then $[0,r_0)\subset I_{K,N}$ (where  $(I_{K,N},\di_{eu},\mm_{K,N})$ is as in Definition \ref{def: model space})  is the quotient of the metric ball $B(\bar{x}, r_0)$ in the model space of constant sectional curvature $K/(N-1)$ after taking polar coordinates centred at $\bar{x}$ and identifying all the points in the sphere at the same distance from $\bar{x}$. It is a classical fact that the first Dirichlet eigenfunction in a metric ball in a model space is radial, smooth, unique, positive (up to multiplication by a non-zero constant) and satifies \eqref{eq: eigen ode}. Thus the claim follows by looking at such an eigenfunction in the model space as a function of the distance from the center of the ball.

 In case of a general $N\in (1,\infty)$, let us recall the main steps of the proof in order to keep the note as self-contained as possible. Most of the arguments are classical, and can be derived immediately from general results in calculus of variations. Notice indeed that $\varphi_{K,N,r_0}$ corresponds to the minimizer of the following functional
 \[\inf_{\{u \ : \ u(r_0)=0\}}\int_0^{r_0}F(\theta,u,u')d\mathcal{L}^1(\theta) \quad \textrm{with }  F(\theta,z,p):=\big(p^2+\lambda(z^2-1)\big)\mathfrak{s}^{N-1}_{K/(N-1)}(\theta).\]
 To simplify the notation, during the proof we will use $\varphi_1$ in place of $\varphi_{K,N,r_0}$ and $\lambda$ in place of $\lambda_{K,N,r_0}$.

 \textbf{Step 1.} Existence of a minimizer.
\\Let $W^{1,2}(I_{K,N},\di_{eu},\mm_{K,N})$ be the Sobolev space of $L^2$ functions with $L^2$ weak first derivative in the interval $I_{K,N}$ endowed with the measure $\mm_{K,N}$. Define $H$ to be the closure (in $W^{1,2}(I_{K,N},\di_{eu},\mm_{K,N})$) of the space of Lipschitz functions $u:[0,+\infty)\rightarrow \R$ such that $u(\theta)=0$, for all  $\theta\geq r_0$. It is easy to see that $H$ is a Hilbert space. Consider the functional
\begin{equation}\label{eq:defJ}
J: H\setminus\{0\}\to \R, \quad J(u):= \frac{\int |u'|^2 \,  \dd \mm_{K,N}}{ \int |u|^2 \,  \dd \mm_{K,N} }.
\end{equation}
Notice that $J$ is homogenous of degree $0$, so it is completely determined by its values for those $u\in H$ with $\|u\|_{L^2(I_{K,N},\mm_{K,N})}=1$.
Using that $H$ compactly embeds in $L^2(I_{K,N},\mm_{K,N})$ and that $u\mapsto \int |u'|^2 \,  \dd \mm_{K,N}$ is lower semicontinuous with respect to weak convergence in $H$, it follows from the direct method in the calculus of variations that $J$ admits a (a priori non-unique) global minimizer $\varphi_1\in H$. Denote
\begin{equation}\label{eq:dellambda}
\lambda:=J(\varphi_1).
\end{equation}
It is easy to see that $\lambda>0$. We next prove that $\varphi_1$ is an eigenfunction relative to the eigenvalue $\lambda$.
\medskip

 \textbf{Step 2.}  $\varphi_1$ is an eigenfunction.
\\ Fix an arbitrary $\psi\in H$. Since $\varphi_1\not \equiv 0$ $\mm_{K,N}$-a.e., there exists some small $\varepsilon>0$ such that $\|\varphi_1+t \psi\|_{L^2(I_{K,N},\mm_{K,N})}>0$ for all $|t|<\varepsilon$. Consider the function
$J_{\varphi_1}(\cdot):(-\varepsilon, \varepsilon)\to \R$ defined as
\begin{equation}\label{eq:defJ1}
J_{\varphi_1}(t):= \frac{\int |(\varphi_1+t\psi)'|^2 \,  \dd \mm_{K,N}}{ \int |\varphi_1+t\psi|^2 \,  \dd \mm_{K,N} }.
\end{equation}
It is easy to see that $J_{\varphi_1}(\cdot)$ is a rational function of $t$ with a minimum at $t=0$. It follows that $J_{\varphi_1}'(0)=0$, that is
$$
\frac{\int 2 \varphi_1'  \psi'  \,  \dd \mm_{K,N}}{ \int |\varphi_1|^2 \,  \dd \mm_{K,N}} - \frac{\int |\varphi_1'|^2 \,  \dd \mm_{K,N} } { \left(\int |\varphi_1|^2 \,  \dd \mm_{K,N}\right)^2} \; \int 2 \varphi_1 \psi  \,  \dd \mm_{K,N}=0.
$$
Using that $J_{\varphi_1}(0)=J(\varphi_1)=\lambda$, from \eqref{eq:dellambda} and simplifying, we obtain
\begin{equation}\label{eq:varphi1EFweak}
\int \left(\varphi_1' \psi'- \lambda \varphi_1 \psi \right) \, \dd \mm_{K,N}=0.
\end{equation}
Since \eqref{eq:varphi1EFweak} holds for all $\psi\in H$, this means that $\varphi_1$ is in the kernel of the linear operator
\begin{equation}
L: H\to H', \quad Lu= u''+ (\log h_{K,N})' u' + \lambda u,
\end{equation}
i.e. $\varphi_1$ is an eigenfunction relative to the eigenvalue $\lambda$.
\medskip

 \textbf{Step 3}. Regularity of  $\varphi_1$.
 \\From Step 2 we know that $\varphi_1$ is a weak solution of the differential equation
 \begin{equation}\label{eq:varphi1diffEq}
 \varphi_1''+ (\log h_{K,N})' \varphi_1' + \lambda \varphi_1=0.
 \end{equation}
 From the explicit expression of $\mm_{K,N}$,  it is clear that $h_{K,N}>0$ and is smooth on $(0,r_0)$. One can then bootstrap \eqref{eq:varphi1diffEq} to get that $\varphi_1$ is of class $C^\infty((0,r_0))$ and it is a classical solution of the differential equation \eqref{eq:varphi1diffEq} in the interval $(0,r_0)$. Using that  $h_{K,N}$ is continuous and positive on $(0,r_0]$ (here we use that $r_0<D_{K,N}$ in case $K>0$), it is easy to see that  $\varphi_1(r_0)=0$, and that $\varphi_1$ is continuous in $[0,+\infty)$ and Lipschitz on $[\varepsilon, +\infty)$ for every $\varepsilon>0$.
 \medskip

  \textbf{Step 4}. Let $w\in {\rm Ker}(L)$ and let $w_+:=\max\{w,0\}$. Then also $w_+\in  {\rm Ker}(L)$.
 \\If $w_+=0$ $\mm_{K,N}$-a.e. then clearly $w_+\in   {\rm Ker}(L)$. We can then assume that $w_+\not \equiv 0$. It is easy to see that $w_+\in H$. Using $w_+$ as a test function in the weak formulation of $L w_+=0$ (i.e. \eqref{eq:varphi1EFweak} with $\varphi_1$ replaced by $w_+$) gives
$$
\int \left[(w_+')^2- \lambda  w_+^2 \right] \, \dd \mm_{K,N}=0.
$$
This means that $J(w_+)=\lambda=J(\varphi_1)$, that is $w_+$ is a global minimizer for $J$. Repeating the argument of step 2 (with $w_+$ in place of $\varphi_1$), we obtain that $w_+\in  {\rm Ker}(L)$.
\medskip

  \textbf{Step 5}.   All functions in ${\rm Ker}(L)\setminus\{0\}$ do not change sign.
  \\Let $w\in {\rm Ker}(L)\setminus\{0\}$. Repeating the arguments in step 3 replacing $\varphi_1$ by $w$, we obtain that $w\in C([0,r_0])\cap C^\infty((0,r_0))$. Replacing $w$ by $-w$ if necessary, we may assume that $w>0$ on some open interval $I_0$. By step 4, we know that $w_-:=\max\{-w,0\}$ is an element of ${\rm Ker}(L)$. By construction, $w_-\equiv 0$ on $I_0$. By the uniqueness of solutions to ODEs (Cauchy-Lipschitz theorem), it follows that $w_- \equiv 0$ on $[0, r_0]$ and thus $w(x)\geq 0$ for all $x\in [0,r_0]$.  
\medskip

  \textbf{Step 6}.  ${\rm Ker}(L)$ has dimension 1.
\\We know that $\varphi_1\in {\rm Ker}(L)$, so ${\rm dim} ({\rm Ker}(L))\geq 1$. If ${\rm dim} ({\rm Ker}(L))> 1$, then there exist two linearly independent functions $w_1, w_2\in {\rm Ker}(L)$. Applying Gram-Schmidt orthonormalization procedure, we may assume that $\int w_1 \, w_2\, \dd \mm_{K,N}=0$.  By step 5, we may also assume that $w_1, w_2\geq 0$, forcing $w_1$ and $w_2$ to be concentrated on disjoint subsets. It follows that $w_1-w_2$ is an element of ${\rm Ker}(L)$ that changes sign, contradicting step 5.  We conclude that ${\rm dim} ({\rm Ker}(L))= 1$.
\medskip

 \textbf{Step 7}. $\varphi_1$ is positive, decreasing and Lipschitz on $[0,r_0]$.
\\So far we proved that the eigenfuction $\varphi_1$ is unique and non-negative up to multiplication by a non-zero constant, and it classically solves the differential equation \eqref{eq: eigen ode} on $(0,r_0)$. The fact that $\varphi_1'(\theta)< 0$ for all $\theta\in (0,r_0)$, if $\varphi_1$ is positive follows immediately: using \eqref{eq: eigen ode} we can infer that $\varphi_1$ satisfies
\begin{equation}\label{eq: identity phi'-phi}
\varphi_1'(\theta)h_{K,N}(\theta)=-\lambda\int_0^{\theta} \varphi_1(s)h_{K,N}(s)\dd s \qquad  \forall \theta\in (0,r_0),
\end{equation}
which implies $\varphi_1'(\theta)< 0$ for all $\theta\in (0,r_0)$, since $h_{K,N}>0$ in $(0,r_0]$. 

Using that
\[\lim_{\theta\downarrow 0}\frac{\mathfrak{s}_{\kappa}(\theta)}{\mathfrak{s}_{\kappa}'(\theta)}=0 \qquad \forall \kappa\in\mathbb{\theta},\]
we can also infer
\[
\lim_{\theta\downarrow 0} \frac{\int_0^{\theta} \varphi_1(s)h_{K,N}(s)\dd s}{h_{K,N}(\theta)}=0
\]
by applying L'H\^{o}pital's rule and recalling \eqref{eq: identity phi'-phi}, we have $\lim_{\theta\downarrow 0}\varphi_1'(\theta)=0$. In particular, $\varphi_1'$ is bounded in an interval $(0,\varepsilon_1)$ for a sufficiently small $\varepsilon_1>0$, from which we can easily conclude that $\varphi_1$ is Lipschitz in $[0,+\infty)$ thanks to the already proved continuity in $[0,+\infty)$ and Lipschitz regularity in $[\varepsilon,+\infty)$, for any $\varepsilon>0$.

The positivity of $h_{K,N}$ in the interior of the interval also implies, by a classical result in Sturm-Liouville theory, that $\varphi_1>0$ in $(0,r_0)$.
\end{proof}

\subsection{Localization}
In the proof of the main result of this note, we will use a localization argument based on the disintegration theorem. In order to make the paper as self-contained as possible, we briefly recall the results we will use (see \cite{cavalletti-mondino-inv,cavalletti-mondino-apde} for more details).

Let $(X,\sfd,\mm)$ be an essentially non-branching metric measure space satisfying the $\CD^{\star}(K,N)$ condition for some $K\in \R$ and $N\in (1,\infty)$. Fix a point $\bar{x}\in X$ and consider $u(\cdot):=\sfd(\bar{x}, \cdot)$, the distance function from $\bar{x}$. It is clear that $u$ is a $1$-Lipschitz function. Let
\begin{equation}\label{def:Gammau}
\Gamma_u:=\{(x,y)\in X\times X \colon u(x)-u(y)=\sfd(x,y)\},
\end{equation}
consider its transpose $\Gamma_u^{-1}:=\{(x,y)\in X\times X \colon (y,x)\in \Gamma_u\}$, and define
\begin{equation}\label{def:Ru}
R_u:=\Gamma_u\cup \Gamma_u^{-1}.
\end{equation}
Exploiting the essentially non-branching assumption, it is possible to prove that there exists a null set $\mathcal{N}\subset X$, $\mm(\mathcal{N})=0$, such that $R_u$ is an equivalence relation on
$$\tilde{X}:=X\setminus \mathcal{N}.$$
 The equivalence classes of such a relation are geodesics of $X$ (more precisely, geodesics emanating from $\bar{x}$). It follows that $\tilde{X}$ is partitioned by  a disjoint family of geodesics $\{X_q\}_{q\in Q}$, each of them isometric to an interval of $\R$. Here $Q$ is a suitable set of indices. In the smooth setting where $X$ is an $n$-dimensional Riemannian manifold $(M,g)$, we can choose $Q$ to be the unit sphere $S^{n-1}\subset T_{\bar{x}} M$, $q\in S^{n-1}$ a unit tangent vector, and $X_q$ the (maximally extended) geodesic obtained by exponentiating the vector $q$.

The partition $\{X_q\}_{q\in Q}$ of $\tilde{X}$ induces a decomposition of the reference measure $\mm$, via the disintegration theorem. Let us briefly recall the main elements of the construction and the associated properties.

The set of indices $Q$ can be identified with a measurable subset of $X$, intersecting each geodesic $X_q$ exactly ones (see \cite[Sect.\,3.1]{cavalletti-mondino-apde} for the details).
Moreover, one can define a measurable quotient map $\mathfrak{Q}:\tilde{X}\to Q$ induced by the partition  $\{X_q\}_{q\in Q}$:
\begin{equation}\label{eq:defQuotMap}
q=\mathfrak{Q}(x) \quad \Longleftrightarrow\quad x\in X_q.
\end{equation}
In the next statement,  $\mathcal{M}_+(X)$ denotes the space of non-negative Radon measures over $X$. For the proof we refer to \cite[Thm.\;3.4]{cavalletti-mondino-apde} and to the discussion in  \cite[Sec.\,3.B]{cavalletti-mondino-apde} after  \cite[Thm.\,5.1]{cavalletti-mondino-inv}

\begin{theorem}[\cite{cavalletti-mondino-inv,cavalletti-mondino-apde}]\label{th: loc}
Let $(X,\di,\mm)$ be an essentially non-branching metric measure space satisfying the $\mathsf{CD}^{\star}(K,N)$ condition, for some $K\in \R$ and $N\in (1,\infty)$.
Fix a point $\bar{x}\in X$ and let $u(\cdot):=\sfd(\bar{x}, \cdot)$ be the distance function from $\bar{x}$.

Then the reference measure $\mm$ admits the following disintegration formula:
\begin{equation}\label{eq:Dismm}
\mm=\int_Q \mm_{q}\,\mathfrak{q}(\dd q).
\end{equation}
Here, $\mathfrak{q}$ is a Borel probability measure over $Q\subset X$ such that $\mathfrak{Q}_{\sharp}\mm\ll \mathfrak{q}$ and the map
$$
Q\ni q \mapsto \mm_q \in \mathcal{M}_+(X)
$$
satisfies the following properties:
\begin{enumerate}
\item For any $\mm$-measurable set $B$, the map $q\mapsto \mm_q(B)$ is $\mathfrak{q}$-measurable.
\item \emph{Consistency}: For every  $\mm$-measurable set $B$ and any  $\mathfrak{q}$-measurable set $C$, the following disintegration formula holds:
$$
\mm(B\cap \mathfrak{Q}^{-1}(C))=\int_C\mm_q(B)\; \mathfrak{q}(\dd q).
$$
\item \emph{Strong consistency}: For $\mathfrak{q}$-a.e.\;$q\in Q$, the measure $\mm_q$ is concentrated on $X_q=\mathfrak{Q}^{-1}(q)$.
\item \emph{Rays are one-dimensional}: for $ \mathfrak{q}$-a.e.\;$q\in Q$, $X_q$ is either a point or a segment of a geodesic; in other words, $(X_q, \sfd_{| X_q\times X_q})$ is isometric to a real interval of the form $[0, r(q)]$, or $[0, r(q))$, or $(0, r(q)]$, or $(0, r(q))$, with $r(q)\in [0,+\infty]$.
\item For $\mathfrak{q}$-a.e.\;$q\in Q$, $\mm_{q}$ is a non-negative Radon measure satisfying $\mm_{q}=h_{q}\mathcal{H}^1\llcorner_{X_{q}}$ with $h_{q}$ a $\CD(K,N)$ density.
\end{enumerate}
\end{theorem}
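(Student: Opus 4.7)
The proof follows the localization paradigm developed by Cavalletti--Mondino, building on Klartag's smooth construction. I would organize it into three blocks: (i) constructing a measurable partition of a full-measure subset $\tilde X$ into maximal transport rays of $u$; (ii) applying a measurable selection and the abstract disintegration theorem to produce $\mathfrak q$ and the family $\{\mm_q\}$ satisfying properties (1)--(4); and (iii) verifying that each conditional $\mm_q$ is absolutely continuous with a $\mathsf{CD}(K,N)$ density, which is (5).

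\textbf{Block (i): the transport relation.} Since $u$ is $1$-Lipschitz, $\Gamma_u$ in \eqref{def:Gammau} encodes pairs joined by a geodesic along which $u$ strictly decreases, so $R_u=\Gamma_u\cup \Gamma_u^{-1}$ is automatically reflexive and symmetric. Transitivity can only fail at \emph{branching} points of the family of transport rays emanating from $\bar x$. Define the transport set $\mathcal T_u:=\{x\in X:\exists y\neq x,(x,y)\in R_u\}$, together with the forward and backward branching sets $A_+$, $A_-$ where two distinct geodesics starting at $x$ both belong to $\Gamma_u$ (respectively $\Gamma_u^{-1}$). The critical claim is $\mm(A_+\cup A_-)=0$. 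I would prove it by contradiction: if $\mm(A_+)>0$, one would build two absolutely continuous measures $\mu_0,\mu_1$ supported near $A_+$ and further along the competing rays, and check that every $\nu\in\mathrm{OptGeo}(\mu_0,\mu_1)$ must give positive mass to a branching Borel subset of $\mathrm{Geo}(X)$, contradicting the essentially non-branching hypothesis. Setting $\tilde X:=\mathcal T_u\setminus(A_+\cup A_-)$ (augmented by a Borel null set $\mathcal N$) makes $R_u$ a Borel equivalence relation on $\tilde X$ whose classes are maximal geodesics from $\bar x$.

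\textbf{Block (ii): disintegration and quotient structure.} Each equivalence class $X_q$ is a geodesic parametrized by $u$, so I can pick a cross-section by selecting on each ray the unique point $x$ with $u(x)=c(q)$ for a Borel-chosen threshold; a Kuratowski--Ryll-Nardzewski-type argument makes this section $Q\subset \tilde X$ Borel, and the quotient map $\mathfrak Q$ of \eqref{eq:defQuotMap} is therefore Borel measurable. I would then invoke the abstract disintegration theorem (Fremlin, in the form recalled in \cite{cavalletti-mondino-apde}) applied to $\mm\llcorner_{\tilde X}$ and $\mathfrak Q$ to obtain the decomposition \eqref{eq:Dismm} with $\mathfrak q\ll \mathfrak Q_\sharp \mm$, $\mathfrak q$-measurability of $q\mapsto \mm_q(B)$, consistency, and strong consistency. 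The fact in (4) that $(X_q,\sfd_{|X_q\times X_q})$ is isometric to an interval is built in by construction, as $u$ restricts to an isometry on each ray.

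\textbf{Block (iii): $\mathsf{CD}(K,N)$ density on each ray.} This is the heart of the theorem and the step I expect to be the main obstacle. Fix a typical $q\in Q$ and two points $x_0,x_1\in X_q$ with small one-sided ray neighborhoods $B_0,B_1\subset X_q$ of comparable $\mm$-mass. Define $\mu_i:=\mm(\cdot\cap \mathfrak Q^{-1}(U_i))/\mm(\mathfrak Q^{-1}(U_i))$ where $U_i\subset Q$ is a small transverse neighborhood of $q$; these are $\mm$-absolutely continuous. I would apply the $\mathsf{CD}^\star(K,N)$ inequality \eqref{eq: def CD(K,N)} (with $\sigma_{K,N'}$) along a $W_2$-geodesic $(\mu_t)$ connecting $\mu_0$ to $\mu_1$. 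The essential point, obtained again from the essentially non-branching property together with the fact that $\Gamma_u$ is a $\sfd^2$-cyclically monotone set, is that the optimal dynamical plan is ray-wise: for $\mathfrak q$-a.e.\ $q$ the fiber of $\mu_t$ above $q$ is supported on $X_q$. Consequently the $N'$-Tsallis inequality on $X$ disintegrates, and passing to the limit as the transverse neighborhoods $U_i$ shrink around $q$ reduces the inequality to the one-dimensional statement \eqref{eq:defhCD} for the density $h_q=\dd\mm_q/\dd\mathcal H^1\llcorner_{X_q}$. The regularity properties (boundedness, strict positivity in the interior, local Lipschitz and semi-concavity) then follow from the general structural properties of $\mathsf{CD}(K,N)$ densities on intervals quoted from \cite{CaMi}.

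The hardest part is block (iii): extracting the one-dimensional $\mathsf{CD}(K,N)$ inequality for $h_q$ from the ambient $\mathsf{CD}^\star(K,N)$ condition, which requires precisely that Wasserstein transport between measures concentrated on short cylinders around a given ray stays along that ray up to an $\mm$-null set. Block (i) also relies non-trivially on essentially non-branching, but in a more combinatorial way (to rule out measure-theoretically large branching); block (iii) uses it in a sharper quantitative manner to guarantee ray-wise decomposition of optimal dynamical plans.
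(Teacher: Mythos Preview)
The paper does not give its own proof of this theorem; it is stated as a quotation from \cite{cavalletti-mondino-inv,cavalletti-mondino-apde}, with the line ``For the proof we refer to \cite[Thm.\;3.4]{cavalletti-mondino-apde} and to the discussion in \cite[Sec.\,3.B]{cavalletti-mondino-apde} after \cite[Thm.\,5.1]{cavalletti-mondino-inv}''. So there is nothing to compare your proposal against in the paper itself, and the relevant question is whether your sketch tracks the Cavalletti--Mondino argument. In broad strokes it does: blocks (i) and (ii) correctly identify the removal of the branching set (which uses essentially non-branching together with the curvature condition, not essentially non-branching alone) and the abstract disintegration over a measurable cross-section; block (iii) correctly identifies the localization of the curvature-dimension inequality to the rays as the substantive step.

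Two small corrections. First, you wrote $\mathfrak q\ll \mathfrak Q_\sharp\mm$, whereas the statement asserts $\mathfrak Q_\sharp\mm\ll\mathfrak q$; in practice one takes $\mathfrak q$ mutually absolutely continuous with $\mathfrak Q_\sharp\mm$, so this is a slip rather than an error. Second, in block (iii) your description of the test measures $\mu_i$ is garbled: you introduce ray neighborhoods $B_0,B_1\subset X_q$ around $x_0,x_1$ but then define $\mu_i$ purely in terms of transverse sets $U_i\subset Q$, which as written are supported over the whole ray and not near $x_i$. The actual construction uses ``cylinders'' $\mathfrak Q^{-1}(U)\cap\{a<u<b\}$ combining a transverse neighborhood with a ray interval; after applying the $\mathsf{CD}^\star$ inequality to such pairs and disintegrating (using that optimal plans stay on rays), one sends the transverse width to zero and obtains \eqref{eq:defhCD} for $h_q$. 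With that fix, your outline matches the cited proof.
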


\section{Proof of Cheng's eigenvalue theorem}\label{sec: proof Cheng}
This section is devoted to the proof of our main result, Theorem \ref{th: Cheng dir}. To do so, we take advantage of the localization technique that we have recalled in Theorem \ref{th: loc}, choosing $u(x):=\di(x_0,x)$ as guiding function.

A first crucial step in the proof of Cheng's bound is the analysis of the $1$-dimensional case, which is performed in the following theorem.
\begin{theorem}\label{th: 1dim}
Let $K\in \mathbb{R}$, $N\in(1,\infty)$, and let $r_0\in (0,D_{K,N})$. Let $h$ be a non-identically vanishing $\mathsf{CD}(K,N)$ density on an interval with left endpoint $0$ and right endpoint $b\ge r_0$; set $\mm:=h\mathcal{L}^1$. Denote by $\varphi_{K,N,r_0}$ the first positive Dirichlet eigenfunction for the interval $[0,r_0)$ in the model space $(I_{K,N},\di_{eu},\mm_{K,N})$. Then
\begin{equation}\label{eq: 1dimineq}
\int_{[0,\theta]}(\varphi_{K,N,r_0}')^2\,\dd \mm\le \lambda_{K,N,r_0}\int_{[0,\theta]}\varphi_{K,N,r_0}^2\,\dd \mm \qquad \forall \ \theta\in(0,r_0].
\end{equation}
Equality in \eqref{eq: 1dimineq} holds if and only if: 
 $\theta=r_0$ and there exists $c>0$ such that $h(x)=h_{K,N}(x)$, for every $x\in (0,r_0)$.

\end{theorem}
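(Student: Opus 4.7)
The plan is to derive an identity expressing the difference
\[
\Delta(\theta) := \int_0^\theta (\varphi')^2\,\dd\mm - \lambda\int_0^\theta \varphi^2\,\dd\mm
\]
(abbreviating $\varphi := \varphi_{K,N,r_0}$ and $\lambda := \lambda_{K,N,r_0}$) as a sum of two manifestly non-positive contributions: a boundary term at $\theta$ and an integral whose sign is controlled by the one-dimensional Bishop--Gromov-type monotonicity of $h/h_{K,N}$. Once each piece is shown to be $\leq 0$, the inequality \eqref{eq: 1dimineq} follows, and the equality case is read off by requiring each contribution to vanish.

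Concretely, I would start from the ODE \eqref{eq: eigen ode}, i.e.\ $\varphi'' + (\log h_{K,N})'\varphi' = -\lambda\varphi$ on $(0,r_0)$, multiply by $\varphi\,h$, and integrate over $(\varepsilon,\theta)$. Using the product rule $(\varphi'\varphi h)' = \varphi''\varphi h + (\varphi')^2 h + \varphi'\varphi h'$ together with the local Lipschitzianity of $h$ in the interior (recalled in Section \ref{sec:prel}), integration by parts yields
\[
\int_\varepsilon^\theta (\varphi')^2 h\,\dd t - \lambda\int_\varepsilon^\theta \varphi^2 h\,\dd t = \bigl[\varphi'\varphi h\bigr]_\varepsilon^\theta + \int_\varepsilon^\theta \varphi'\varphi\,h\,\bigl[(\log h_{K,N})' - (\log h)'\bigr]\,\dd t.
\]
Proposition \ref{prop: modeleigen} ensures that $\varphi\in C^\infty((0,r_0))$ is positive, $\varphi'<0$ on $(0,r_0)$, and $\lim_{\theta\downarrow 0}\varphi'(\theta)=0$; since $h$ is bounded on bounded subintervals, sending $\varepsilon\downarrow 0$ kills the boundary contribution at $\varepsilon$ and gives
\[
\Delta(\theta) = \varphi'(\theta)\varphi(\theta)h(\theta) + \int_0^\theta \varphi'\varphi\,h\,\bigl[(\log h_{K,N})' - (\log h)'\bigr]\,\dd t.
\]

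The first summand is $\leq 0$ because $\varphi(\theta)\geq 0$, $\varphi'(\theta)\leq 0$, and $h(\theta)\geq 0$. For the integrand, $\varphi'\varphi h\leq 0$ on $(0,r_0)$, whereas the bracket is $\geq 0$ by a Bishop--Gromov-type comparison applied to the $\mathsf{CD}(K,N)$ density $h$: the ratio $\theta\mapsto h(\theta)/h_{K,N}(\theta)$ is non-increasing on the interior, equivalently $(\log h)'\leq (\log h_{K,N})'$ a.e. This is standard and follows from the fact that $g:=h^{1/(N-1)}$ satisfies $g''+\tfrac{K}{N-1}g\leq 0$ distributionally while $g_0:=\mathfrak{s}_{K/(N-1)}$ solves the same equation with equality, so the Wronskian $W:=g'g_0-gg_0'$ is non-increasing and $\lim_{t\downarrow 0}W(t)\leq 0$, forcing $(g/g_0)'=W/g_0^2\leq 0$. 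Therefore $\Delta(\theta)\leq 0$, which is \eqref{eq: 1dimineq}.

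For the equality case, both summands must vanish. Since $\varphi>0$ and $\varphi'<0$ on $(0,r_0)$ and any non-identically zero $\mathsf{CD}(K,N)$ density is strictly positive in the interior of its interval, the boundary term is strictly negative for every $\theta\in(0,r_0)$; hence equality forces $\theta=r_0$, and at that endpoint $\varphi(r_0)=0$ makes the boundary term vanish. With the boundary term zero, vanishing of the integral enforces $(\log h)'\equiv(\log h_{K,N})'$ a.e.\ on $(0,r_0)$, which integrates to $h=c\,h_{K,N}$ on $(0,r_0)$ for some constant $c>0$, as claimed. The main technical hurdle is the clean justification of the Bishop--Gromov-type monotonicity up to the endpoint $0$, which is handled by the Wronskian argument combined with the regularity $\varphi'(0^+)=0$ from Proposition \ref{prop: modeleigen}.
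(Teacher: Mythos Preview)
Your argument is correct and follows essentially the same route as the paper's proof: both rest on the ODE \eqref{eq: eigen ode}, integration by parts against $h$, the sign information $\varphi>0$, $\varphi'<0$ from Proposition~\ref{prop: modeleigen}, and the comparison $(\log h)'\le(\log h_{K,N})'$ (which the paper imports from \cite[Lemma~A.9]{CaMi} rather than sketching via a Wronskian). One small remark on your closing sentence: the regularity $\varphi'(0^+)=0$ is what kills the lower boundary term in the integration by parts, but it plays no role in the Bishop--Gromov comparison for $h$; the claim $\lim_{t\downarrow 0}W(t)\le 0$ must come from the behaviour of $g$ and $g_0$ near $0$ alone (e.g.\ $g_0(0)=0$, $g_0'(0)=1$, and a concavity-type bound on $tg'(t)$), and the paper simply cites this as known.
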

\begin{proof}
Since $h$ is a $\mathsf{CD}(K,N)$ density, then $h$ is positive and locally Lipschitz in $(0,r_0)$.

Recalling that $(\log{h})'\le (\log{h_{K,N}})'$ a.e.\;in $(0,r_0)$ (see \cite[Lemma A.9]{CaMi}),  and using \eqref{eq: eigen ode}, \eqref{eq: eigen-Decreas}, we get that 
\begin{equation}\label{eq: a.e. ineq}
\begin{aligned}
\lambda_{K,N,r_0}&\varphi^2_{K,N,r_0}h+\varphi_{K,N,r_0}(\varphi'_{K,N,r_0}h)'\\
&=\lambda_{K,N,r_0}\varphi^2_{K,N,r_0}h+\varphi_{K,N,r_0}h(\varphi_{K,N,r_0}''+(\log{h})'\varphi_{K,N,r_0}')\\
&\ge\lambda_{K,N,r_0}\varphi^2_{K,N,r_0}h+\varphi_{K,N,r_0}h(\varphi_{K,N,r_0}''+(\log{h_{K,N}})'\varphi_{K,N,r_0}')=0,
\end{aligned}
\end{equation}
 a.e.\;in  $(0,r_0)$.  Let $\theta\in (0,r_0]$ and $\varepsilon>0$. Since $h$ is locally semi-concave in $(0,r_0)$ and by the properties of Proposition \ref{prop: modeleigen}, we know that the functions $\varphi_{K,N,r_0}$ and $\varphi'_{K,N,r_0}h$ are absolutely continuous in $[\varepsilon,\theta-\varepsilon]$, so that 
\begin{align*}\int_{\varepsilon}^{\theta-\varepsilon}\left(\lambda_{K,N,r_0}\varphi^2_{K,N,r_0}-(\varphi_{K,N,r_0}')^2\right)\,\dd \mm+\Big[\varphi_{K,N,r_0}\varphi_{K,N,r_0}'h\Big]^{\theta-\varepsilon}_{\varepsilon}\\
=\int_{\varepsilon}^{\theta-\varepsilon}\left(\lambda_{K,N,r_0}\varphi^2_{K,N,r_0}h+\varphi_{K,N,r_0}(\varphi'_{K,N,r_0}h)'\right)\dd\mathcal{L}^1.
\end{align*}
We next take the limit as $\varepsilon\downarrow0$: using the dominated convergence theorem together with Proposition \ref{prop: modeleigen} in the left hand side, and the monotone convergence theorem together with \eqref{eq: a.e. ineq} in the right hand side, we infer that
\begin{align*}\int_{0}^{\theta}\left(\lambda_{K,N,r_0}\varphi^2_{K,N,r_0}-(\varphi_{K,N,r_0}')^2\right)\,\dd \mm+\lim_{x\uparrow \theta}\varphi_{K,N,r_0}(x)\varphi_{K,N,r_0}(x)'h(x)\\
=\int_{0}^{\theta}\left(\lambda_{K,N,r_0}\varphi^2_{K,N,r_0}h+\varphi_{K,N,r_0}(\varphi'_{K,N,r_0}h)'\right)\dd\mathcal{L}^1,
\end{align*}
where 
\[
\lim_{x\uparrow \theta}\varphi_{K,N,r_0}(x)\varphi_{K,N,r_0}(x)'h(x)=\begin{cases}
 \varphi_{K,N,r_0}(\theta)\varphi_{K,N,r_0}(\theta)'h(\theta)<0 \quad &\textrm{ if } \theta\in (0,r_0),   \\
 0 &\textrm{ if } \theta=r_0,
 \end{cases}
\]
as a consequence of the regularity and sign properties of $\varphi_{K,N,r_0}, \, \varphi_{K,N,r_0}'$ and $h$. 
Using again \eqref{eq: a.e. ineq}, we obtain the inequality \eqref{eq: 1dimineq}. Moreover, equality in \eqref{eq: 1dimineq} forces $\theta=r_0$ and $(\log{h})' \equiv (\log{h_{K,N}})'$ a.e. on $[0,r_0]$. Since $\log{h}$ is absolutely continuous in $(0,r_0)$, we can integrate the last identity and infer that $h(x)=ch_{K,N}(x)$, for every $x\in (0,r_0)$, for a suitable constant $c>0$.

\end{proof}

We can now prove our main result by taking advantage of the localization technique.
\begin{proof}[Proof of Theorem \ref{th: Cheng dir}]
It is sufficient to prove the Theorem assuming $r_0\in (0,D_{K,N})$, otherwise the inequality trivially holds. 
Let $\mathsf{d}_{x_0}:X\to [0,\infty)$ be the $1$-Lipschitz function $\di_{x_0}(x):=\di(x_0,x)$ and let $\varphi_{K,N,r_0}$ be the first positive Dirichlet eigenfunction (see Proposition \ref{prop: modeleigen}) of $[0,r_0)\subset I_{K,N}$, where $(I_{K,N},\di_{eu},\mm_{K,N})$ is the 1-dimensional model space (see Definition \ref{def: model space}). Since $\varphi_{K,N,r_0}$ is Lipschitz on $[0,+\infty)$ and $\varphi_{K,N,r_0}(\theta)=0$ for every $\theta\ge r_0$, it is immediate to see that $f_n=(\varphi_{K,N,r_0}\circ \di_{x_0}-\frac{1}{n})^{+}\in \Lip_{\textsf{c}}(B(x_0,r_0))$ and $f_n\to \varphi_{K,N,r_0}\circ \di_{x_0}$ in $W^{1,2}$ as $n\to +\infty$, so that by the definition of the local Sobolev space and the chain rule formula for local Sobolev functions (see \cite{Gigli} for all the details) we have 
\begin{equation}\label{eq:varphi'leqD}
\varphi_{K,N,r_0}\circ \di_{x_0}\in W^{1,2}_0(B(x_0,r_0)), \quad \textrm{with} \ |D(\varphi_{K,N,r_0}\circ \di_{x_0})|(x)\le |\varphi_{K,N,r_0}'|(\di_{x_0}(x)).
\end{equation}
 We consider localization in the space $(X,\di,\mm)$ with respect to $\di_{x_0}$, in the sense of Theorem \ref{th: loc}. Let $\tilde{r}(q):=\min\{r_0,r(q)\}$. By applying Theorem \ref{th: 1dim}, we obtain
\begin{equation}\label{eq: dismain Ray}
\begin{aligned}
\int_{B(x_0,r_0)} |D(\varphi_{K,N,r_0}\circ \di_{x_0})|^2\dd \mm& = \int_Q\left(\int_0^{\tilde{r}(q)} |D(\varphi_{K,N,r_0}\circ \di_{x_0})|^2\,\dd \mm_{q}\right)\mathfrak{q}(\dd q)\\
&\leq \int_Q\left(\int_0^{\tilde{r}(q)} (\varphi_{K,N,r_0}')^2\,\dd \mm_{q}\right)\mathfrak{q}(\dd q)\\
&\le \lambda_{K,N,r_0} \int_Q\left(\int_0^{\tilde{r}(q)}\varphi_{K,N,r_0}^2\,\dd \mm_{q}\right)\mathfrak{q}(\dd q)\\
&=\lambda_{K,N,r_0}\int_{B(x_0,r_0)}|\varphi_{K,N,r_0}\circ \di_{x_0}|^2\dd \mm\,.
\end{aligned}
\end{equation}
Since $\varphi_{K,N,r_0}\circ \di_{x_0}$ is a competitor in the definition \eqref{eq: def lambda_1^D} of $\lambda_1^D(B(x_0,r_0))$, the proof is concluded.
\end{proof}

We next establish rigidity in Theorem \ref{th: Cheng dir}. To this aim, we additionally assume that $(X,\di, \mm)$ is infinitesimally hilbertian, i.e.\;$(X,\di, \mm)$ is $\RCD^*(K,N)$.  Recall that $\RCD^*(K,N)$ spaces are essentially non-branching \cite{rajala-sturm}. 

In order to state the result, for $\kappa\in \R$ recall the definition \eqref{eq: sine func curv} of $\mathfrak{s}_\kappa(\cdot)$.
Moreover, we refer to \cite{Ket-JMPA} for the definition of $(K,N)$-cone; here, let us just recall that for $K=0$ it reduces to the standard metric cone, and that for $K>0$ it corresponds to a spherical suspension.

\begin{theorem}[Rigidity in Cheng's comparison]\label{th: Cheng dir-Rigid}
Let $(X,\di,\mm)$ be an $\RCD^{\star}(K,N)$ space, $K\in \R$ and $N\in (1,\infty)$.  Assume that equality holds in Cheng's comparison \eqref{eq:ChengBoundIntro} for some  $x_0\in X$ and $r_0>0$, i.e.
\begin{equation*}
\lambda_1^D(B(x_0,r_0))= \lambda_{K,N,r_0}.
\end{equation*} 
Then exactly one of the following cases holds:
\begin{enumerate}
\item \emph{$\partial B(x_0, r_0/2)$ contains only one point.} In this case $(X, \sfd)$ is isometric to $[0,{\rm Diam}(X)]$ ($[0,\infty)$ if $X$ is unbounded) with an isometry mapping $x_0$ to $0$ and the measure $\mm\llcorner{B(x_0, r_0)}$ to the measure $c\, \mathfrak{s}_{K/(N-1)}(x) \mathcal{L}^1(\dd x)$ for  $c:=\frac{\mm(B(x_0, r_0))}{\mm_{K,N}([0,r_0))}$.
\item  \emph{$\partial B(x_0, r_0/2)$ contains exactly two points.}  In this case $(X,\sfd)$ is a 1-dimensional Riemannian manifold, possibly with boundary. \\ More precisely, there exists a bijective local isometry (in the sense of distance-preserving maps) from $B(x_0, r_0)$ to $(-r_0, r_0)$ mapping $x_0$ to $0$ and the measure $\mm\llcorner{B(x_0, r_0)}$  to the measure $c\, \mathfrak{s}_{K/(N-1)}(|x|)\,  \mathcal{L}^1(\dd x)$ for $c:=\frac{\mm(B(x_0, r_0))}{2\mm_{K,N}([0,r_0))}$. Additionally, such a local isometry is an isometry when restricted to $\bar B(x_0, r_0/2)$.
\item \emph{$\partial B(x_0, r_0/2)$ contains at least  three points.} In this case there exists an $\RCD^*(N-2,N-1)$ space $Z$ such that if $Y$ is the $(K,N)$-cone over $Z$ with ``origin''  $O_Y$, then  $B(x_0, r_0)\subset X$  is locally isometric to $B(O_Y, r_0)\subset Y$. Additionally, such a local isometry is a measure-preserving bijection, which, when restricted to $\bar B(x_0, r_0/2)$, is an isometry.
\end{enumerate}
\end{theorem}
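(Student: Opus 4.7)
The plan is to trace equality in Cheng's inequality backwards through the localization argument in the proof of Theorem \ref{th: Cheng dir}, apply the rigidity in Theorem \ref{th: 1dim} on each needle, and finally invoke cone rigidity for $\RCD^*(K,N)$ spaces. First, set $u:=\varphi_{K,N,r_0}\circ\di_{x_0}$. Equality in \eqref{eq:ChengBoundIntro} forces every inequality in \eqref{eq:varphi'leqD} and \eqref{eq: dismain Ray} to be an equality. In particular, by the rigidity part of Theorem \ref{th: 1dim}, for $\qq$-a.e.\;$q\in Q$ one has (i) $\tilde r(q)=r_0$, i.e.\;every needle $X_q$ extends at least up to distance $r_0$ from $x_0$, and (ii) $h_q(\theta)=c(q)\,h_{K,N}(\theta)$ for a.e.\;$\theta\in(0,r_0)$ and some $c(q)>0$. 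In addition, $u$ attains the infimum in \eqref{eq: def lambda_1^D}, so it is a genuine Dirichlet eigenfunction of $-\Delta$ on $B(x_0,r_0)$ with eigenvalue $\lambda_{K,N,r_0}$, and the pointwise equality $|Du|=|\varphi_{K,N,r_0}'|\circ\di_{x_0}$ holds $\mm$-a.e.\;on $B(x_0,r_0)$.

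Next, infinitesimal Hilbertianity permits use of the chain rule for the Laplacian applied to $-\Delta u=\lambda_{K,N,r_0}u$; comparing with the ODE \eqref{eq: eigen ode} satisfied by $\varphi_{K,N,r_0}$ and combining with (ii) yields the sharp identity
\[
\bd\di_{x_0}\,\llcorner\,(B(x_0,r_0)\setminus\{x_0\}) = (\log h_{K,N})'(\di_{x_0})\,\mm\,\llcorner\,(B(x_0,r_0)\setminus\{x_0\}),
\]
in the sense of measure-valued Laplacians: this is the equality case in the Bishop--Gromov/Laplacian comparison for the distance from $x_0$. With this sharp Laplacian identity in hand, one invokes the volume-cone-implies-metric-cone rigidity for $\RCD^*(K,N)$ spaces (Ketterer \cite{Ket-JMPA} and subsequent refinements) to produce an $\RCD^*(N-2,N-1)$ space $Z$ such that, letting $Y$ be the $(K,N)$-cone over $Z$ with apex $O_Y$, the ball $B(x_0,r_0)$ is locally isometric (by a measure-preserving map) to $B(O_Y,r_0)$; the length bound (i) together with essential non-branching ensures that this local isometry restricts to a genuine isometry on $\bar B(x_0,r_0/2)$.

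The trichotomy is then read off from the cross-section: $\partial B(x_0,r_0/2)$ is in bijection (via the local isometry) with $Z$. Cardinality one forces $Z$ to be a point and $Y$ a half-line, yielding case (1); cardinality two forces $Z$ to be a two-point space so that $Y$ is two half-lines glued at the apex, i.e.\;a real line, yielding case (2); cardinality at least three gives case (3). In cases (1) and (2), the explicit normalization $c=\mm(B(x_0,r_0))/\mm_{K,N}([0,r_0))$ (resp.\;halved in case (2) by the two distinct rays) follows from (ii) together with the identity $\int_{[0,r_0)}h_{K,N}\,d\mathcal{L}^1=\mm_{K,N}([0,r_0))$.

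The principal difficulty is the cone-rigidity step: upgrading the rigid radial data (uniform ray lengths with model conditional densities, plus sharp Laplacian of the distance) to a fully rigid \emph{cone} structure on $B(x_0,r_0)$, including the identification of $Z$ as an $\RCD^*(N-2,N-1)$ space and the careful passage from a local to a global isometry on the inner half-ball. This requires localizing the existing $\RCD$ cone-rigidity theory to the setting of a ball of radius $r_0$ rather than the whole space, and correctly accounting for the possibility that distinct rays might meet outside $\bar B(x_0,r_0/2)$ (which is precisely what prevents the map from being globally injective on $B(x_0,r_0)$ in cases (2) and (3)).
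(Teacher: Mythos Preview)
Your argument is essentially correct and parallels the paper's up to the point where you have, for $\qq$-a.e.\ $q$, the ray length $\tilde r(q)=r_0$ and $h_q=c(q)\,h_{K,N}$ on $(0,r_0)$. From there, however, the paper takes a shorter and more robust route: rather than passing through the eigenfunction equation and the chain rule for the measure-valued Laplacian to obtain sharp Laplacian comparison for $\di_{x_0}$, it simply plugs $h_q=c(q)\,h_{K,N}$ into the disintegration formula to compute directly
\[
\frac{\mm(B(x_0,r))}{\mm(B(x_0,R))}
=\frac{\int_Q c(q)\,\qq(dq)\cdot \int_0^r h_{K,N}}{\int_Q c(q)\,\qq(dq)\cdot \int_0^R h_{K,N}}
=\frac{\mm_{K,N}([0,r))}{\mm_{K,N}([0,R))}\qquad\text{for all }0<r<R\le r_0,
\]
and then invokes \cite[Thm.~4.1]{DPG-GAFA} (De~Philippis--Gigli, ``volume cone $\Rightarrow$ metric cone'' for $\RCD^*(K,N)$ spaces). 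This bypasses the need to verify that $u$ lies in $D(\Delta)$, to justify the chain rule for $\bd(\varphi\circ\di_{x_0})$, and to divide by $\varphi'$; it also avoids any issue near $\theta=0$ where $\varphi'$ vanishes.

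Two further remarks. First, the correct reference for the cone-rigidity step is De~Philippis--Gigli \cite{DPG-GAFA}, not Ketterer \cite{Ket-JMPA}; the latter concerns $(K,N)$-cones and maximal-diameter rigidity, while the former is precisely the local ``volume cone to metric cone'' statement you need. Second, the ``principal difficulty'' you flag---localizing cone rigidity to a ball and handling rays meeting outside $\bar B(x_0,r_0/2)$---is already fully addressed in \cite[Thm.~4.1]{DPG-GAFA}, which is stated for balls and has the trichotomy (according to $\#\partial B(x_0,r_0/2)$) built into its conclusion. So once you have the volume-cone identity above, there is nothing left to do.
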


\begin{proof}
If equality holds in \eqref{eq:ChengBoundIntro} then necessarily also \eqref{eq: dismain Ray} holds with equality. It follows that
\begin{equation*}
\int_0^{\tilde{r}(q)} (\varphi_{K,N,r_0}')^2\,\dd \mm_{q}=  \lambda_{K,N,r_0} \int_0^{\tilde{r}(q)}\varphi_{K,N,r_0}^2\,\dd \mm_{q}, \quad \text{for } \mathfrak{q}\text{-a.e. }q\in Q.
\end{equation*}
From the rigidity statement in Theorem \ref{th: 1dim} we infer that, for $\mathfrak{q}$ a.e. $q\in Q$, either  $\mm_{q}$ is the null measure or
\begin{equation}\label{eq:hq=hKN}
\tilde{r}(q)=r_0 \ \ \text{ and }  \ \ \exists c_q>0 \ \textrm{ such that} \ \ {h}_q(t) = c_q\, h_{K,N}(t), \quad \text{ for every } t\in [0,\tilde{r}(q)].
\end{equation}
It follows that, for all $0<r<R\leq r_0$,
\begin{equation}\label{eq:VolumeConeCheng}
\begin{split}
\frac{\mm(B(x_0, r))}{\mm(B(x_0, R))}
&= \frac{\int_Q\left(\int_0^{r}  h_q(t)\,\mathcal{L}^1(\dd t)\right)\mathfrak{q}(\dd q)} {\int_Q\left(\int_0^{R}  h_q(t)\, \mathcal{L}^1(\dd t)\right)\mathfrak{q}(\dd q)}   \\
& = \frac{\int_Q\left(\int_0^{r}   c_q\, h_{K,N}(t)\,\mathcal{L}^1(\dd t)\right)\mathfrak{q}(\dd q)} {\int_Q\left(\int_0^{R}  c_q\, h_{K,N}(t)\, \mathcal{L}^1(\dd t)\right)\mathfrak{q}(\dd q)}  \\
& = \frac{\left(\int_0^{r} h_{K,N}(t)\,\mathcal{L}^1(\dd t) \right) \left( \int_Q   c_q\, \mathfrak{q}(\dd q) \right)} {\left(\int_0^{R} h_{K,N}(t)\,\mathcal{L}^1(\dd t) \right) \left( \int_Q   c_q\, \mathfrak{q}(\dd q) \right)}  \\
&= \frac{\mm_{K,N}([0, r))}{\mm_{K,N}([0, R))}
\end{split}
\end{equation}
where the first identity is obtained from the disintegration Theorem \ref{th: 1dim} combined with \eqref{eq:hq=hKN}, the second identity follows \eqref{eq:hq=hKN}, the third identity is a consequence of Fubini's Theorem, the last identity follows by simplifying the factor $\int_Q   c_q\, \mathfrak{q}(\dd q)\neq 0$ and recalling the Definition \ref{def: model space} of 1-dimensional model space. The rigidity statement  follows from the combination of \eqref{eq:VolumeConeCheng} and \cite[Thm.\;4.1]{DPG-GAFA}. 
\end{proof}

\begin{remark}
The rigidity in Cheng's original paper \cite{cheng-bound} states that if $(M,g)$ is a smooth $N$-dimensional Riemannian manifold with $\Ric_g\geq K g$ that contains a metric ball $B(x_0, r_0)$ achieving equality in Cheng's eigenvalue comparison,  then $B(x_0, r_0)$ is isometric to a metric ball in the $N$-dimensional space-form of constant sectional curvature $K/(N-1)$. Hence, Theorem \ref{th: Cheng dir-Rigid} seems a-priori weaker than the corresponding smooth version. Actually, this is not the case: indeed, in Differential Geometry, it is customary to say that two
Riemannian manifolds are isometric if the pull back of the metric tensors coincide.
However, it is easily seen that this implies only locally isometry as metric spaces.
Optimality of Theorem \ref{th: Cheng dir-Rigid}  (both in the smooth and non-smooth settings) can
be checked on the ball of radius 1 on the flat torus $\mathbb{T}^N$, $N\in \mathbb{N}$.
\end{remark}

We next prove stability in Cheng's comparison; namely, if equality is almost attained in \eqref{eq:ChengBoundIntro}, then the space is pmGH-close to a $(K,N)$-cone.  We denote by $\sfd_{\rm GH }$, $\sfd_{\rm pGH }$, $\sfd_{\rm mGH }$ the GH (Gromov-Hausdorff), pGH (pointed-Gromov-Hausdorff), mGH (measured-Gromov-Hausdorff) distances between compact metric spaces, complete metric spaces, compact metric measure spaces, respectively. We will also write $\sfd_{\rm eu}$ and  $g_{\rm eu}$ for the Euclidean distance and the Euclidean scalar product, respectively.

\begin{theorem}[Stability in Cheng's comparison]\label{th: Cheng dir-Stab}
For every $\delta>0,\;K\in \R,\; N\in (1,\infty),\; r_0>0$ there exists $\varepsilon=\varepsilon(\delta, K, N, r_0)>0$ with the following property.
Let $(X,\di,\mm)$ be an $\RCD^{\star}(K-\delta,N+\delta)$ space.  Assume that there exists $x_0\in X$ such that
\begin{equation*}
\lambda_1^D(B(x_0,r_0))\geq  \lambda_{K,N,r_0}-\delta.
\end{equation*} 

Then exactly one of the following cases holds:
\begin{enumerate}
\item  $\sfd_{\rm GH }\left((X, \sfd), ([0,{\rm Diam}(X)], \sfd_{\rm eu})\right)<\varepsilon$, or  $\sfd_{\rm pGH }\left((X, \sfd), ([0,\infty), \sfd_{\rm eu })\right)<\varepsilon$ in the case ${\rm diam}(X)=\infty$, with a quasi isometry mapping $x_0$ to $0$. Moreover, 
{\small
$$
\sfd_{\rm mGH }\left((\overline{B}(x_0, r_0), \sfd\llcorner{\overline{B}(x_0, r_0)}, \mm\llcorner{\overline{B}(x_0, r_0)}), ([0,r_0], \sfd_{\rm eu }, c\, \mathfrak{s}_{K/(N-1)}(x) \mathcal{L}^1(\dd x)  \right)<\varepsilon,$$} where $c:=\frac{\mm(B(x_0, r_0))}{\mm_{K,N}([0,r_0))}$.
\item There exists a 1-dimensional Riemannian manifold $(M,g_{\rm eu })$, possibly with boundary, such that $\sfd_{\rm pGH }\left((X, \sfd), (M, \sfd_{g_{\rm eu }})\right)<\varepsilon$. Moreover,  
{\footnotesize{$$\sfd_{\rm mGH }\left(\left(\overline{B}\left(x_0, \frac{r_0}{2}\right), \sfd\llcorner{\overline{B}\left(x_0, \frac{r_0}{2}\right)}, \mm\llcorner{\overline{B}\left(x_0, \frac{r_0}{2}\right)}\right), \left(\left[-\frac{r_0}{2},\frac{r_0}{2}\right], \sfd_{\rm eu }, c\, \mathfrak{s}_{K/(N-1)}(x) \mathcal{L}^1(\dd x)  \right) \right)<\varepsilon,$$}} 
where $c:=\frac{\mm(B(x_0, r_0))}{2\mm_{K,N}([0,r_0))}$.
\item There exists an $\RCD^*(N-2,N-1)$ space $Z$ such that if $Y$ is the $(K,N)$-cone over $Z$ with ``origin''  $O_Y$, then
$$
\sfd_{\rm mGH }\left(\overline{B}\left(x_0, \frac{r_0}{2}\right), \overline{B}\left(O_Y, \frac{r_0}{2}\right)  \right)<\varepsilon,
$$
where we wrote $\overline{B}\left(x_0, \frac{r_0}{2}\right)$ for $\left(\overline{B}\left(x_0, \frac{r_0}{2}\right), \sfd\llcorner{\overline{B}\left(x_0, \frac{r_0}{2}\right)}, \mm\llcorner{\overline{B}\left(x_0, \frac{r_0}{2}\right)} \right)$, and $\overline{B}\left(O_Y, \frac{r_0}{2}\right)$ for $\left(\overline{B}\left(O_Y, \frac{r_0}{2}\right), \sfd\llcorner{\overline{B}\left(O_Y, \frac{r_0}{2}\right)}, \mm\llcorner{\overline{B}\left(O_Y, \frac{r_0}{2}\right)} \right)$, for short.
\end{enumerate}
\end{theorem}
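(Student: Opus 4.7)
The plan is a contradiction/compactness argument that directly combines the four ingredients already highlighted in the introduction: Gromov precompactness in the class of $\RCD^{\star}$-spaces, stability of $\RCD^{\star}(K,N)$ under pointed measured Gromov--Hausdorff (pmGH) convergence, the upper semicontinuity of the first Dirichlet eigenvalue on metric balls \cite[Lemma 2.10]{AH18}, and the rigidity Theorem \ref{th: Cheng dir-Rigid}.

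Suppose for contradiction that the statement fails at some tolerance $\varepsilon_0>0$ for fixed $K, N, r_0$. Then there exist $\delta_n\downarrow 0$ and pointed $\RCD^{\star}(K-\delta_n, N+\delta_n)$ spaces $(X_n,\sfd_n,\mm_n,x_n)$ with
\[
\lambda_1^D(B(x_n,r_0))\geq \lambda_{K,N,r_0}-\delta_n,
\]
none of which lies within $\varepsilon_0$ (in the relevant pointed/compact GH or mGH sense) of any of the three rigid models of cases (1)--(3). After normalizing $\mm_n$ so that, say, $\mm_n(B(x_n,r_0))=1$ (which leaves both the curvature-dimension condition and the Rayleigh quotient in \eqref{eq: def lambda_1^D} unchanged), Gromov precompactness provides a pmGH-convergent subsequence with limit $(X_\infty,\sfd_\infty,\mm_\infty,x_\infty)$ which, by pmGH-stability, is $\RCD^{\star}(K,N)$.

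The upper semicontinuity of $\lambda_1^D(B(\cdot,r_0))$ under pmGH convergence \cite[Lemma 2.10]{AH18} yields
\[
\lambda_1^D(B(x_\infty,r_0))\geq \limsup_{n\to \infty}\lambda_1^D(B(x_n,r_0))\geq \lambda_{K,N,r_0},
\]
while Theorem \ref{th: Cheng dir} applied to the $\RCD^{\star}(K,N)$ limit gives the reverse inequality $\lambda_1^D(B(x_\infty,r_0))\leq \lambda_{K,N,r_0}$. Hence equality holds in Cheng's comparison on $X_\infty$, and by the rigidity Theorem \ref{th: Cheng dir-Rigid} the triple $(X_\infty,\sfd_\infty,\mm_\infty)$ must coincide with exactly one of the three rigid models of cases (1), (2), (3). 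Since $X_n \to X_\infty$ in pmGH, for $n$ sufficiently large the corresponding mGH/pGH closeness at scale $\varepsilon_0$ follows, contradicting the choice of $(X_n)$.

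The main technical issue is the passage from pmGH-convergence of the whole pointed spaces to the mGH-convergence of the restricted triples $\bigl(\overline{B}(x_n,r_0/2), \sfd_n\llcorner\cdot, \mm_n\llcorner\cdot\bigr)$ with the explicit limit measure $c\,\mathfrak{s}_{K/(N-1)}(|x|)\,\mathcal{L}^1(\dd x)$. One needs to rule out mass concentration on $\partial B(x_\infty,r_0/2)$, which follows from Bishop--Gromov volume monotonicity in $\RCD^{\star}(K,N)$ (so $r\mapsto \mm_\infty(B(x_\infty,r))$ is continuous), and to match the normalizing constant $c$, which arises from Fubini applied to the disintegration exactly as in the chain of equalities \eqref{eq:VolumeConeCheng} within the proof of Theorem \ref{th: Cheng dir-Rigid}. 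A minor but noteworthy caveat is that in case (1) with $\operatorname{diam}(X_\infty)=+\infty$ one must work with pGH instead of GH, precisely as stated in the theorem.
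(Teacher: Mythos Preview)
Your proposal is correct and follows essentially the same contradiction/compactness route as the paper: assume failure at some fixed $\varepsilon_0$, extract a pmGH-convergent subsequence via Gromov precompactness, use stability of $\RCD^{\star}$ and \cite[Lemma 2.10]{AH18} to force equality in Cheng's comparison on the limit, then invoke Theorem \ref{th: Cheng dir-Rigid} for a contradiction. Your version is in fact slightly more explicit than the paper's (the measure normalization, the appeal to Theorem \ref{th: Cheng dir} for the reverse inequality, and the discussion of why mGH-convergence of the restricted balls holds), but the underlying argument is the same.
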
 

\begin{proof}
Assume by contradiction there exists a sequence  of  $\RCD^{\star}(K-\delta_n,N+\delta_n)$ spaces  $(X_n,\di_n,\mm_n,\bar{x}_n)$, $n\in \N$, with $\delta_n=1/n$, such that
\begin{equation}\label{eq:landeltan}
\lambda_1^D(B(\bar{x}_n,r_0))\geq  \lambda_{K,N,r_0}-\delta_n,
\end{equation} 
and $(X_n,\di_n,\mm_n)$ does not satisfy any of the three cases in the thesis of the theorem, for some fixed $\varepsilon_0>0$; i.e.,  all of  them are $\varepsilon_0>0$ far from the rigid configurations. Then, Gromov's precompactness theorem and the stability of the $\RCD^{\star}$ condition imply that there exists a pointed $\RCD^{\star}(K,N)$ space $(X,\di,\mm,\bar{x})$ such that, up to subsequences, $(X_n,\di_n,\mm_n, \bar{x}_n)\to  (X,\di,\mm,\bar{x})$ in pmGH sense. It follows from \eqref{eq:landeltan} and \cite[Lemma 2.10]{AH18} that $\lambda_1^D(B(\bar{x},r_0))\geq  \lambda_{K,N,r_0}$, i.e. $B(\bar{x},r_0)$ achieves equality in Cheng's comparison \eqref{eq:ChengBoundIntro}. Theorem \ref{th: Cheng dir-Rigid} implies that $(X,\di,\mm,\bar{x})$ falls into one of the three rigid cases, giving a contradiction. 
\end{proof}

%\begin{op}[Stability]
%Does stability hold in Theorem \ref{th: Cheng dir-Rigid}?\\ More precisely, is it true that for every $\epsilon>0$ there exists $\delta>0$ such that if $(X,\di,\mm)$ is an  $\RCD^{\star}(K,N)$ space, containing a ball $B(x_0, r)$, with $r\in (r_0-\delta, r_0+\delta)$, such that $\lambda_1^D(B(x_0,r))\geq  \lambda_{K,N,r_0} -\delta$, then $(X,\di,\mm, x_0)$ is necessarily $\epsilon$-close in pmGH sense to one among the rigid configurations 1.\;2.\;or 3.\;in Theorem \ref{th: Cheng dir-Rigid}?

%A challenge in establishing such a result is that,  in general,  the Dirichlet spectrum is not continuous under pmGH convergence (see \cite{AH18} for necessary and sufficient conditions). 

%The stability in Cheng's inequality seems to be an interesting open question already in the framework of smooth Riemannian manifolds; however, in the latter setting, the stability should follow from Cheeger-Colding's estimates \cite{CC96}.
%\end{op}

\begin{remark}\label{rem: expl value}
In most cases, $\lambda_{K,N,r_0}$ is not known explicitly. For $K=0$, the classical identity
\begin{equation}
	\lambda_{0,N,r_0}= \frac1{r_0^2} j_{\frac N2-1,1}
\end{equation}
holds for $N\in (1,\infty)$, where $j_{\nu,1}$ is the first zero of the Bessel function $J_\nu$. It is known that $j_{-1+N/2,1}^2< N (2+N/2)$. For $N=3$, the Bessel function simplifies, and $j_{1/2,1}=\pi$.

For $K\neq0$, $N\in (1,\infty)$ we can give upper bounds by using the methods of \cite{borisov-freitas}, which extend also to the case $N\notin \mathbb{N}$. Indeed their Lemma 3.1 only uses integration by parts in one dimension, and establishes that
\begin{equation}
	\frac1{r_0} \partial_{r_0} (r_0^2 \lambda(r_0))= \frac{
	\int_0^{r_0} d \theta \psi^2 h/\theta \partial_\theta (\theta^2 h^{-1/2} \partial_\theta^2 h^{1/2})}{\int_0^{r_0} \psi^2 h d\theta}\,
\end{equation}
with $h=h_{K,N}(\theta)$. Their Theorem 3.3 then becomes\footnote{In our language, \cite{borisov-freitas} considers the $|K|=N-1$ case.}
\begin{subequations}
\begin{align}
	\lambda_{K,N,r_0} \leqslant & -\frac16 N K  + \frac {j_{\frac N2-1,1}^2}{r_0^2}
	\quad	&(N<3);\\
 \label{exp.bound N=3}
		\lambda_{K,N,r_0}  = & -\frac12 K  + \frac{\pi^2}{r_0^2}
		\quad &(N=3);\\
  \label{exp.bound N>3}
		\lambda_{K,N,r_0} \leqslant & -\frac14(N-1)K +  \frac{j_{\frac N2-1,1}^2 }{r_0^2}& \nonumber\\
		& +\frac14(N-1)(N-3)\left(\frac1{\mathfrak{s}^2_{K/(N-1)}(r_0)} - \frac1{r_0^2}\right)  &  (N>3)\,.
\end{align}
\end{subequations}
The identity in the $N=3$ case is well-known and it can be established more directly.
\end{remark}

\subsection{Consequences}\label{sec: consequence}
In this section we collect some consequences of Theorem \ref{th: Cheng dir}. Before stating the results, recall that $\mathsf{RCD}(K,N)$ spaces are complete and proper, for $N<\infty$. Thus they are compact if and only if they have finite diameter. 

We start with an upper bound on the $j^{th}$ Neumann eigenvalues of the space, see \eqref{def: m-th Neum eigen}, generalizing the classical  \cite[Thm.\;2.1]{cheng-bound}  to the non-smooth setting.
\begin{theorem}\label{th: Cheng neum}
Let $(X,\di,\mm)$ be an essentially non-branching $\CD^{\star}(K,N)$ space with $\diam(X)=D<+\infty$, $K\in \R$ and $N\in (1,\infty)$. Then for every $j\in \N=\{1,2,...\}$ it holds
$$\lambda_j^N\le \lambda_{K,N,r_0}$$
with $r_0=\frac{D}{2j}.$
\end{theorem}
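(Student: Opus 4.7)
The plan is to implement the classical Cheng-style subspace construction. First, build $j+1$ pairwise disjoint balls of radius $r_0=D/(2j)$; second, produce on each ball an almost-minimizer of the Dirichlet Rayleigh quotient by invoking Theorem \ref{th: Cheng dir}; third, extend these functions by zero and feed them as an explicit $(j+1)$-dimensional subspace into the min--max characterisation \eqref{def: m-th Neum eigen}.

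First I would fix the disjoint balls. Since $(X,\di,\mm)$ is a $\CD^{\star}(K,N)$ space with $N<\infty$ and finite diameter, it is proper and hence compact, and it is geodesic by definition. Thus the diameter is attained, and there exists a minimizing geodesic $\gamma\colon [0,1]\to X$ with $\di(\gamma(0),\gamma(1))=D$. Setting $p_i:=\gamma(i/j)$ for $i=0,\dots,j$ gives $\di(p_i,p_k)=|i-k|D/j$, so the open balls $B(p_i,r_0)$ are pairwise disjoint because consecutive centres sit at distance $D/j=2r_0$. If one prefers not to invoke properness of $\CD^{\star}$, one can replace $D$ by $D-\eta$ for arbitrary $\eta>0$ and at the end use the monotonicity of $r\mapsto\lambda_{K,N,r}$ together with its continuity.

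Next, for each $i$ Theorem \ref{th: Cheng dir} gives $\lambda_1^D(B(p_i,r_0))\le\lambda_{K,N,r_0}$. Given $\varepsilon>0$, by the variational definition \eqref{eq: def lambda_1^D} I would pick $f_i\in W^{1,2}_0(B(p_i,r_0))\setminus\{0\}$ with
\[
\frac{\int |Df_i|^2\,\dd\mm}{\int f_i^2\,\dd\mm}\le \lambda_{K,N,r_0}+\varepsilon.
\]
By the very definition of $W^{1,2}_0(B(p_i,r_0))$, $f_i$ is the $W^{1,2}$-limit of a sequence of compactly supported Lipschitz functions in $B(p_i,r_0)$; extending these by zero gives compactly supported Lipschitz functions on $X$ converging in $W^{1,2}(X,\di,\mm)$. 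Hence the zero-extension $\tilde f_i$ belongs to $W^{1,2}(X,\di,\mm)$ and, by locality of the minimal weak upper gradient, $|D\tilde f_i|=|Df_i|$ on $B(p_i,r_0)$ and vanishes elsewhere, so $\mathcal R(\tilde f_i)\le \lambda_{K,N,r_0}+\varepsilon$.

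Because the balls are pairwise disjoint, $\tilde f_0,\dots,\tilde f_j$ are linearly independent and span a $(j+1)$-dimensional subspace $V_{j+1}\subset W^{1,2}(X,\di,\mm)$. For any nonzero $f=\sum_i c_i\tilde f_i\in V_{j+1}$, disjointness of supports yields
\[
\mathcal R(f)=\frac{\sum_i c_i^2\int |D\tilde f_i|^2\,\dd\mm}{\sum_i c_i^2\int \tilde f_i^2\,\dd\mm}\le\max_{i}\mathcal R(\tilde f_i)\le\lambda_{K,N,r_0}+\varepsilon,
\]
since a convex combination never exceeds the maximum. Plugging $V_{j+1}$ into \eqref{def: m-th Neum eigen} gives $\lambda_j^N\le\lambda_{K,N,r_0}+\varepsilon$, and letting $\varepsilon\downarrow 0$ concludes. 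The only delicate technical point is the zero-extension step: checking that extension by zero from $W^{1,2}_0(B(p_i,r_0))$ to $W^{1,2}(X,\di,\mm)$ is well-defined and preserves the local structure of the minimal weak upper gradient in the non-smooth setting. This follows directly from the definition of $W^{1,2}_0$ as the $W^{1,2}$-closure of $\Lip_{\textsf{c}}$ together with the locality property of $|Df|$ recalled in Section \ref{sec:prel}.
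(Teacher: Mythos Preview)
Your proof is correct and follows essentially the same strategy as the paper: place $j+1$ disjoint balls of radius $D/(2j)$ along a diameter-realizing geodesic, produce test functions with Rayleigh quotient at most $\lambda_{K,N,r_0}$, and feed the resulting $(j+1)$-dimensional span into the min--max definition \eqref{def: m-th Neum eigen}.

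There are two minor differences in execution worth noting. First, the paper uses the explicit competitor $\varphi_{K,N,r_0}\circ\di_{x_i}$ together with the estimate \eqref{eq: dismain Ray} from the proof of Theorem~\ref{th: Cheng dir}, which yields $\mathcal R(\varphi_i)\le\lambda_{K,N,r_0}$ without any $\varepsilon$; you instead treat Theorem~\ref{th: Cheng dir} as a black box and pick $\varepsilon$-almost minimizers, then let $\varepsilon\downarrow 0$. Second, the paper passes through an orthogonality argument against the first $j$ Neumann eigenfunctions $\psi_0,\dots,\psi_{j-1}$, whereas you plug the subspace $V_{j+1}=\mathrm{span}(\tilde f_0,\dots,\tilde f_j)$ directly into the min--max and bound $\max_{f\in V_{j+1}}\mathcal R(f)$ via disjointness of supports. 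Your route is slightly more self-contained: it avoids appealing to the existence of Neumann eigenfunctions, which in the generality of essentially non-branching $\CD^{\star}(K,N)$ (not assumed infinitesimally Hilbertian) is not entirely innocent. Your justification of the zero-extension and of the identity $\int|D(\sum c_i\tilde f_i)|^2=\sum c_i^2\int|D\tilde f_i|^2$ via locality of the minimal weak upper gradient is correct and is exactly the point that makes the disjoint-support computation go through in the non-smooth setting.
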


\begin{proof}[Proof of Theorem \ref{th: Cheng neum}]
Since $X$ is compact, we can find $y_0,y_1\in X$ such that $\diam(X)=\di(y_0,y_1)$. Let $\gamma$ be a geodesic joining $y_0$ and $y_1$ and let us consider $j+1$ points $\{x_i\}_{i=0}^j$ such that $x_i\in \gamma([0,1])$, $x_0=y_0$, $x_j=y_1$, and the balls $B(x_i,\frac{D}{2j})$ are pairwise disjoint.
Let $\varphi_i:=\varphi_{K,N,r_0}\circ \di_i$ where $\di_i(x):=\di(x_i,x)$, $i=0,\dots,j$. The estimate \eqref{eq: dismain Ray} implies the bound $\mathcal{R}(\varphi_i)\le \lambda_{K,N,r_0}$ on the Rayleigh quotient of $\varphi_i$, for every index $i$.

Now let us consider $j$ eigenfunctions $\psi_0,\dots,\psi_{j-1}$ relative to the first $j$ Neumann eigenvalues $\lambda_0^N,\dots,\lambda_{j-1}^N$, respectively. The map $L:\R^{j+1}\to \R^j$ whose $k$-th component is defined as
$$(a_0,\dots,a_{j})\mapsto \int_X \left(\sum_{i=0}^{j}a_i\varphi_i\right)\psi_k\,\dd \mm$$
is clearly linear, thus by Rouché–Capelli theorem we can find $\tilde{a}_0,\dots,\tilde{a}_{j}$ not all zero such that
\begin{equation}\label{eq: ort eigen proof}
\int_X \left(\sum_{i=0}^{j}\tilde{a}_i\varphi_i\right)\psi_k\,\dd \mm=0 \qquad \forall k=0,\dots, j-1.
\end{equation}

Since $\varphi_i$, $\varphi_k$ have disjoint supports for $i\neq k$, the function $\Phi:=\displaystyle \sum_{i=0}^{j}\tilde{a}_i\varphi_i$ is non-null, $\Phi\in W^{1,2}(X,\di,\mm)$ as a linear combination of $W^{1,2}$ functions, and it is orthogonal to the first $j$ eigenfunctions $\{\psi_k\}_{k=0}^j$ thanks to \eqref{eq: ort eigen proof}. By the very definition of $\lambda_j^N$ given in \eqref{def: m-th Neum eigen}, and using again that $\varphi_i$, $\varphi_k$ have disjoint supports for $i\neq k$,  we can conclude
$$\lambda_j^N\le \mathcal{R}(\Phi)=\frac{\sum_{i=0}^{j}\tilde{a}_i^2\mathcal{R}(\varphi_i)\int_X \varphi_i^2\,\dd \mm}{\sum_{i=0}^{j}\tilde{a}_i^2\int_X \varphi_i^2\,\dd \mm}\le \lambda_{K,N,r_0}$$
as desired.
\end{proof}

We also provide another application, related to a bound on the essential spectrum of the Laplacian in non-compact spaces. For smooth Riemannian manifolds, the result was proved in \cite[Thm. 3.1]{Donn81}, after \cite[Thm.\;4.2]{cheng-bound}. 
\begin{theorem}\label{th: essspect}
 Let $(X,\di,\mm)$ be a non-compact $\mathsf{RCD}(K,N)$ space, $K\le 0$ and $N\in [3, \infty)$.  Then the essential spectrum of $-\Delta$ intersects the interval $[0,-(N-1)K/4]$.
\end{theorem}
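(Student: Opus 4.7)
The plan is to produce, for every $\varepsilon > 0$, an infinite sequence of pairwise disjoint metric balls in $X$ on which Theorem~\ref{th: Cheng dir} yields Dirichlet eigenvalues arbitrarily close to $\lambda_0 := -(N-1)K/4$, and then to invoke the min--max characterization of $\inf \sigma_{\mathrm{ess}}(-\Delta)$. The first ingredient is the asymptotic $\limsup_{r \to \infty} \lambda_{K,N,r} \leq \lambda_0$. For $N = 3$ this follows from the identity \eqref{exp.bound N=3}, which gives $\lambda_{K,3,r} = -K/2 + \pi^2/r^2 \to \lambda_0$. For $N > 3$ the bound \eqref{exp.bound N>3} already isolates the correct leading term $-(N-1)K/4$, and the remaining corrections vanish as $r \to \infty$ because $K \leq 0$ forces $\mathfrak{s}_{K/(N-1)}(r) \to +\infty$. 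This is precisely where the hypothesis $N \geq 3$ enters: for $1 < N < 3$ the bound in Remark~\ref{rem: expl value} yields only $-NK/6$, which is strictly larger than $\lambda_0$.

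Given $\varepsilon > 0$, fix $R > 0$ with $\lambda_{K,N,R} < \lambda_0 + \varepsilon$. Since $(X,\di,\mm)$ is an $\RCD(K,N)$ space with $N < \infty$, it is complete and proper, so non-compactness forces $\mathrm{diam}(X) = +\infty$. An elementary induction, at each step choosing a point far from the previously selected ones, produces a sequence $\{p_i\}_{i \in \N} \subset X$ with $\di(p_i, p_j) > 2R$ for $i \neq j$; in particular, the open balls $B_i := B(p_i, R)$ are pairwise disjoint. By Theorem~\ref{th: Cheng dir} we have $\lambda_1^D(B_i) \leq \lambda_{K,N,R} < \lambda_0 + \varepsilon$ for every $i$, so we can pick $f_i \in W^{1,2}_0(B_i)$ with $\int |Df_i|^2 \, \dd\mm < (\lambda_0 + 2\varepsilon) \int f_i^2 \, \dd\mm$ and extend $f_i$ by zero to an element of $W^{1,2}(X, \di, \mm)$.

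The disjointness of supports implies that for every $n \in \N$ the linear span $V_n := \mathrm{span}\{f_1, \ldots, f_n\}$ is $n$-dimensional and the Rayleigh quotient of any non-zero element of $V_n$ is bounded by $\max_{1 \leq i \leq n} \mathcal{R}(f_i) < \lambda_0 + 2\varepsilon$. Applying the classical min--max principle to the non-negative self-adjoint operator $-\Delta$ on $L^2(X, \mm)$, the $n$-th min--max value $\mu_n$ satisfies $\mu_n \leq \lambda_0 + 2\varepsilon$. Since $\mu_n$ is non-decreasing in $n$ and converges to $\inf \sigma_{\mathrm{ess}}(-\Delta)$ (with $+\infty$ as value only in the case $\sigma_{\mathrm{ess}}(-\Delta) = \emptyset$), we conclude that the essential spectrum is non-empty and $\inf \sigma_{\mathrm{ess}}(-\Delta) \leq \lambda_0 + 2\varepsilon$. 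Letting $\varepsilon \downarrow 0$, together with $\sigma(-\Delta) \subset [0, \infty)$, yields $\sigma_{\mathrm{ess}}(-\Delta) \cap [0, -(N-1)K/4] \neq \emptyset$. The only real obstacle is the asymptotic analysis of $\lambda_{K,N,r}$ in the first paragraph (which also pins down the constraint $N \geq 3$); the packing of disjoint balls and the invocation of the min--max principle are then routine.
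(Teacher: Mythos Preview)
Your proof is correct and follows essentially the same route as the paper: both arguments use the explicit upper bounds \eqref{exp.bound N=3}--\eqref{exp.bound N>3} to show $\lambda_{K,N,r}\to -(N-1)K/4$ as $r\to\infty$, pack infinitely many disjoint balls of large radius using non-compactness and properness, apply Theorem~\ref{th: Cheng dir} on each ball, and then feed the resulting test functions with small Rayleigh quotient into a spectral-theoretic criterion for the bottom of the essential spectrum. The only difference is in that last step: the paper invokes Donnelly's criterion, which requires test functions in $D(\Delta)$ and hence an additional density/approximation argument, whereas you work directly with the min--max principle on the form domain $W^{1,2}$, which is slightly more streamlined.
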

\begin{proof}
    We recall that $-\Delta$ is a self-adjoint operator in $L^2(X,\mm)$, so that using \cite[Prop.\;2.1]{Donn81} it is sufficient to prove that for any $\varepsilon>0$ there exists an infinite dimensional subspace $G_{\varepsilon}$ of the domain $D(\Delta)$ of the Laplacian, such that
  \begin{equation}\label{eq:Donnellycond}
    \int_{X}(-f\Delta f+(N-1)K/4f^2-\varepsilon f^2)\;\dd\mm<0,
   \end{equation}
    for every $f\in G_{\varepsilon}$.
    In order to prove \eqref{eq:Donnellycond},  fix $\varepsilon>0$ and notice that, using \eqref{exp.bound N=3},~\eqref{exp.bound N>3} and recalling that $(X,\di)$ is non-compact, we can find $\{x_i\}_{i=1}^{\infty}$, and $r>0$ sufficiently large such that $B(x_i,r)\cap B(x_j,r)=\emptyset$ for $i\neq j$ and
 \[
      \lambda_1^D(B(x_i,r)) \le \lambda_{K,N,r}< -\frac{1}{4}(N-1)K+\varepsilon/2 \qquad \forall i=1,2,\dots,
\]
where the first inequality follows by Theorem \ref{th: Cheng dir}. Set $\Omega:=\bigcup_{i=1}^\infty B(x_i,r)$. By the very definition of $\lambda_1^D(B(x_i,r))$, there exist $\{f_i\}_{i=1}^\infty\in W^{1,2}_0(\Omega)\subset W^{1,2}(X,\di,\mm)$ with  
$$\mathcal{R}(f_i)<- \frac{1}{4} (N-1) K+\frac{\varepsilon}{2}, \quad  \text{and} \quad  f_i\in \Lip_{\textsf{c}}(B(x_i,r)), \quad  \forall i=1,2,\dots.$$
 Using the density of $D(\Delta)$ in $W^{1,2}(X,\di,\mm)$ and reasoning as in the proof of \cite[Thm.\;7.8]{GMS},
one can find a sequence $\{\tilde{f}_i\}_{i=1}^\infty$ with $\tilde{f}_i\in D(\Delta)$, $\{\tilde{f}_i\}_{i=1}^k$ linearly independent and $\mathcal{R}(\tilde{f})<  -\frac{1}{4}(N-1)K+\varepsilon$ for every $\tilde{f}\in \mathrm{span}(\tilde{f}_1,\dots,\tilde{f}_k)$ and every $k\in \mathbb{N}$. Defining $G_{\varepsilon}:=\mathrm{span}(\{f_i\}_{i=1}^\infty)$ we have that \eqref{eq:Donnellycond} is satisfied after integrating by parts, and thus the proof is complete.

\end{proof}

\section{Physical applications}\label{sec: phys}

In the context of compactifications of higher-dimensional theories of gravity, eigenvalues of certain differential operators in the extra dimensions correspond to masses of particles observed in the four-dimensional spacetime. More generally, this correspondence holds for dimensional reduction to any number of dimensions, and these particles are known as \emph{Kaluza Klein (KK) particles}. Bounds on the eigenvalues of these operators directly translate to bounds on the masses of such particles.

Given a $D$-dimensional theory of gravity, a $d$-dimensional vacuum (with $d<D$) is a configuration in which the $D$-dimensional line element takes the form
\begin{equation}
	\text{ds}^2_D (x, y) \equiv \ee^{2 A (y)}\left( \text{ds}^2_{d, \Lambda} (x) +
	\text{ds}^2_n (y) \right) \,, \quad \;n \equiv D-d\;, \label{eq:metAns}
\end{equation}
where $\text{ds}^2_{d, \Lambda}$ is the metric on a maximally symmetric $d$-dimensional space-time (characterized by its cosmological constant $\Lambda$) and $A(y)$ is a function supported only on the extra dimensions, called the \emph{warp function}, or \emph{warping}. Fluctuations around these vacua are associated with particles in $d$ dimensions. In particular, the masses of the spin-2 Kaluza-Klein particles are given by the eigenvalues of the natural Laplacian on the extra dimensions associated with the line element $\text{ds}^2_n$ and weighted by $\ee^f$, with $f \equiv (D-2)A$ \cite{bachas-estes,csaki-erlich-hollowood-shirman,deluca-t-leaps}. Explicitly, this operator acts on a function $\psi$ as
\begin{equation}\label{eq:BELaplacian}
	\Delta_f (\psi) \equiv - \frac{1}{\sqrt{g}} \ee^{- f} \partial_m
	\left( \sqrt{g}  g^{m n} \ee^f \partial_n \psi \right) = \Delta \psi-\nabla f \cdot \dd \psi\,.
\end{equation}
Masses of spin-2 Kaluza-Klein fields are given by eigenvalues of the operator \eqref{eq:BELaplacian}. Recall that, if the space has no boundary, the eigenvalues of \eqref{eq:BELaplacian} coincide with the non-zero Neumann eigenvalues. The latter can be bounded using the results in this paper (in particular by Thm \ref{th: Cheng neum}), provided we can also bound the curvature in these spaces. Such curvature bounds can be obtained from the equations of motion of the theory. Indeed, for general warped compactifications, the classical equations of motion can be combined as \cite[Sec.~2.1]{deluca-deponti-mondino-t-entropy}
\begin{equation}
	\text{Ric}^{(2-d)}_{mn} = \Lambda g_{mn} + \tilde{T}_{mn}
\end{equation}
where
\begin{equation}
	\text{Ric}^{(q)}_{mn}\equiv	R_{mn} - \nabla_m \nabla_n f + \frac{1}{n-q} \nabla_m f \nabla_n f \;,
\end{equation}
and $\tilde{T}^{mn}$ is an appropriate combination of stress-energy tensors. When only classical sources are present in the background, with the exception of localized objects with negative tension (such as O-planes in string theory), $\tilde{T}^{mn}$ is non-negative \cite{deluca-t-leaps}, a condition named the Reduced Energy Condition (REC) in \cite{deluca-t-leaps}. In this case, we get the curvature bound
\begin{equation}\label{eq:curvEq}
	\text{Ric}^{(2-d)} \geqslant \Lambda\;.
\end{equation}
Smooth Riemannian manifolds satisfying \eqref{eq:curvEq} are $\RCD(K,N)$ spaces with $K = \Lambda$ and $N = 2-d <0$. This class also encompasses certain physical singularities, such as D-branes \cite{deluca-deponti-mondino-t,deluca-deponti-mondino-t-entropy}.
Theorems that bound the eigenvalues of \eqref{eq:BELaplacian} on $\RCD(K,N)$ spaces with $N<0$ thus directly translate into rigorous bounds on the spin-two Kaluza-Klein modes \cite[Sec.~6]{deluca-deponti-mondino-t-entropy}.

However, larger classes of eigenvalue bounds, such as those studied in this paper, are so far only available for $N>1$.
These two frameworks can be connected through an upper bound on the gradient of the warping.
Specifically, \eqref{eq:curvEq} can be equivalently rewritten as
\begin{equation}\label{eq:genN}
	\text{Ric}^{(N)}\geqslant \Lambda-\frac{N + d-2}{(D-2)(N-D+d)}\sigma^2\,,\qquad \sigma \equiv \sup{|\nabla f|}\,,\qquad \forall N> D-d.
\end{equation}

The case $N  = \infty$ was originally studied in \cite{deluca-t-leaps} for smooth configurations. Using the results in this work and the bound $\eqref{eq:genN}$, we can obtain more general results on the spectum of spin-2 KK particles.

Applying Thm.~\ref{th: Cheng neum}  we then obtain the following
\begin{proposition}\label{prop:kk}
	The mass of the $j$-th spin-2 Kaluza Klein fluctuation around a general $d$-dimensional vacum compactification \eqref{eq:metAns} of a $D$-dimensional gravitational theory that satifies the REC is bounded as
	\begin{equation}\label{eq:eq-prop}
		m^2_j \leqslant \lambda_{K, N, r_0}
	\end{equation}
	with
	\begin{equation}
		K = \Lambda-\frac{N + d-2}{(D-2)(N-D+d)}\sigma^2\;, \qquad \sigma \equiv \sup{|\nabla f|}\;,\qquad r_0 = \frac{\diam}{2j}\;,
	\end{equation}
	$\forall N > D-d$, where $\rm{diam}$ is the diameter of the internal space.
\end{proposition}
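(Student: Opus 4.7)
The plan is to chain together three inputs already assembled in the paper: (i) the spectral identification of spin-2 KK masses with eigenvalues of the weighted Laplacian $\Delta_f$ on the internal space, (ii) the curvature bound \eqref{eq:genN} that converts the REC into an $\RCD(K,N)$ condition with finite positive $N$, and (iii) the Cheng-type upper bound on Neumann eigenvalues given by Theorem \ref{th: Cheng neum}. Nothing new has to be proved; the task is essentially to verify that the hypotheses of Theorem \ref{th: Cheng neum} are met and to identify the resulting quantity with $m_j^2$.

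First, I would recall that, as summarized in Section \ref{sec: phys} (and discussed in \cite{deluca-deponti-mondino-t,deluca-deponti-mondino-t-entropy}), the relevant internal space for a $d$-dimensional vacuum compactification of the form \eqref{eq:metAns} is a (possibly singular) metric measure space, and the spin-2 KK masses coincide with the eigenvalues of the Bakry--Émery Laplacian $\Delta_f$ in \eqref{eq:BELaplacian}. In the absence of a boundary the non-zero eigenvalues of $\Delta_f$ agree with the Neumann eigenvalues of the weighted Sobolev/Dirichlet form associated with the measure $\ee^f \sqrt{g}\,\mathrm{d}y$, i.e. with the quantities $\lambda_j^N$ defined in \eqref{def: m-th Neum eigen}. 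So $m_j^2=\lambda_j^N$ is the object to be bounded.

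Next, I would invoke the REC to get $\mathrm{Ric}^{(2-d)}\geq\Lambda$ on the internal space, and then use the algebraic identity turning \eqref{eq:curvEq} into \eqref{eq:genN}: for every $N>D-d$,
\begin{equation*}
\mathrm{Ric}^{(N)} \;\geq\; \Lambda - \frac{N+d-2}{(D-2)(N-D+d)}\,\sigma^2 \;=\; K,
\end{equation*}
where $\sigma = \sup |\nabla f|$ is finite by hypothesis. Since smooth weighted manifolds satisfying a synthetic lower bound on $\mathrm{Ric}^{(N)}$ with $N\in(1,\infty)$ are $\RCD(K,N)$ (and by the results of \cite{deluca-deponti-mondino-t,deluca-deponti-mondino-t-entropy} this extends to the relevant singular configurations), the internal space is an essentially non-branching $\CD^\star(K,N)$ space with these parameters, and in particular it is compact with finite diameter $\mathrm{diam}$.

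Finally, I would apply Theorem \ref{th: Cheng neum} with this $K$, $N$, $D=\mathrm{diam}$ and $r_0=\mathrm{diam}/(2j)$, to obtain $\lambda_j^N \leq \lambda_{K,N,r_0}$, which combined with the identification $m_j^2 = \lambda_j^N$ of step one yields \eqref{eq:eq-prop}. The only genuine obstacle in writing this out is making sure that in each concrete compactification scheme of interest the hypotheses underlying (i)--(iii) really hold simultaneously: namely that the relevant internal space is covered by the $\RCD$ framework (which is non-trivial in the presence of O-planes and other localized sources, but is established in the cited companion papers), that $\sigma<\infty$, and that $N>D-d$ can be chosen so as to make the right-hand side of \eqref{eq:genN} a meaningful lower bound; once these are granted, the proof is a one-line composition of the inputs listed above.
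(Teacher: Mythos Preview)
Your proposal is correct and follows essentially the same route as the paper: the paper's ``proof'' consists precisely of the discussion preceding the proposition, which identifies $m_j^2$ with the Neumann eigenvalues of the Bakry--\'Emery Laplacian, derives the curvature bound \eqref{eq:genN} from the REC, and then invokes Theorem~\ref{th: Cheng neum}. Your three-step chain (i)--(iii) matches this exactly, and your closing caveat about verifying finiteness of $\sigma$ and membership in the $\RCD$ class anticipates the content of the remark that follows the proposition in the paper.
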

\begin{remark}
	In the smooth case, Prop.~\ref{prop:kk} follows directly from \eqref{th: Cheng neum} and \eqref{eq:genN}. To extend its validity to backgrounds with physical $\RCD$ singularities such as D-branes and M-branes \cite{deluca-deponti-mondino-t,deluca-deponti-mondino-t-entropy}, we first notice that some of these sources are at infinite distance \cite[Table 1]{deluca-deponti-mondino-t}, resulting in an infinite diameter that does not allow to apply the result. The sources at finite distance are D6, D7, and D8. For D7 and D8, the theorem can be directly applied, while the case of D6 branes requires more care since $\sigma^2 \to \infty$ approaching the source. Nonetheless, $\text{Ric}^{(N)}$ is still bounded from below on the D6; in fact, $\text{Ric}^{(N)} > 0$ in the local D6 solution (cf.~\cite[Sec.~2.2]{deluca-deponti-mondino-t}).
	We can then proceed as follows. Recall that locally a D6 source is completely determined by a harmonic function $H \sim (r/r_{D_6})^{-1}$, where $r_{D_6} = g_s l_s/(2\pi)$ and $g_s$ and $l_s$ respectively the string coupling and string length. Approaching the D6 and stopping at $r = x\,  r_{D_6}$, if $x$ is not too large the local D6 solution is a good approximation to the full solution for $r< x\, r_{D_6}$, and thus in this region we can replace the full solution with the local D6 solution, for which the curvature is non-negative. For solutions with D6 branes, a global weighted-Ricci curvature lower bound is then obtained as
	\begin{equation}
		K = \Lambda-\frac{N + d-2}{(D-2)(N-D+d)}\hat{\sigma}^2\;, \qquad \hat \sigma \equiv \sup_{\hat{M}}{|\nabla f|\,,}
	\end{equation}
	where $\hat M$ denotes the internal manifold with all the D6 branes excised at distance $r = x\, r_{D_6}$. At the joining locus $|\nabla f| = \frac{\pi}{x\, g_s l_s}$, which can be made small for $x \gg (g_s l_s)^{-1}$. But the further we go from the D6 pole the less we can trust \ a priori the general local solution: this forbids us to write a strong eigenvalue bound that applies to \emph{any} solution with D6. Nonetheless, such a bound can be easily computed on a case-by-case basis by directly computing the weighted-Ricci curvature lower bound $K$ on the given solution.

\end{remark}

\begin{remark}
While in Thm.~\ref{th: Cheng neum} the quantities $K$ and $N$ are independent, for our physical application in Prop.~\ref{prop:kk}, $K$ depends on $N$, which is free to take any value $> D-d$. A natural question is then to determine the behavior of the bound as a function of $N$. Considering the generic case $N>3$, we can further bound the right-hand side of \eqref{eq:eq-prop} using \eqref{exp.bound N>3}. Keeping all other variables fixed, this quantity diverges to $+\infty$ both as $N \to \infty$ and $N \to D-d$, implying an optimal value of $N$, which can be determined numerically as a function of the other quantities. Even for general $N$, these new bounds improve the ones obtained in \cite[Thm.~2]{deluca-t-leaps} in the case $N = \infty$; indeed, the latter bounds had an exponential dependence on the geometric dimension.

\end{remark}

\bibliography{at}
\bibliographystyle{siam}

\end{document}